\newcommand{\pd}{\partial}
\newcommand{\eps}{\varepsilon}
\newcommand{\Lap}{\Delta}
\renewcommand{\div}{\mathrm{div}\,}
\newcommand{\R}{\mathbb{R}}
\newcommand{\pdnu}{\pd_{\bm{n}}}
\newcommand{\inn}[2]{\langle #1, #2 \rangle}
\newcommand{\no}[1]{\| #1 \|}
\newcommand{\N}{\mathbb{N}}
\newcommand{\dx}{\, \mathrm{dx}}
\newcommand{\dy}{\, \mathrm{dy}}
\newcommand{\dt}{\, \mathrm{dt}}
\newcommand{\vp}{\varphi}
\newcommand{\Th}{\mathcal{T}_h}
\newcommand{\Sh}{\mathcal{S}_h}
\newcommand{\Ih}{\mathrm{I}_h}
\newcommand{\bS}{\mathbb{S}}
\newcommand{\bM}{\mathbb{M}}
\newcommand{\mean}[1]{\langle #1 \rangle_\Omega}
\newcommand{\V}{\mathcal{V}}
\newcommand{\tc}[1]{#1}
\theoremstyle{plain}
\newtheorem{theorem}{Theorem}[section]
\newtheorem{remark}{Remark}[section]
\numberwithin{equation}{section}
\title{Stability and convergence of relaxed scalar auxiliary variable schemes for Cahn--Hilliard systems with bounded mass source}
\author{Kei Fong Lam \footnotemark[1] \and Ru Wang \footnotemark[1]}
\date{ }
\begin{document}
\maketitle

\renewcommand{\thefootnote}{\fnsymbol{footnote}}
\footnotetext[1]{Department of Mathematics, Hong Kong Baptist University, Kowloon Tong, Hong Kong ({\tt $\{$akflam,wangru\_22$\}$@math.hkbu.edu.hk}).}

\begin{abstract} 
The scalar auxiliary variable (SAV) approach of Shen et al.~(2018), which presents a novel way to discretize a large class of gradient flows, has been extended and improved by many authors for general dissipative systems. In this work we consider a Cahn--Hilliard system with mass source that, for image processing and biological applications, may not admit a dissipative structure involving the Ginzburg--Landau energy. Hence, compared to previous works, the stability of SAV-discrete solutions for such systems is not immediate.  We establish, with a bounded mass source, stability and convergence of time discrete solutions for a first-order relaxed SAV scheme in the sense of Jiang et al.~(2022), and apply our ideas to Cahn--Hilliard systems appearing in diblock co-polymer phase separation, tumor growth, image inpainting and segmentation.
\end{abstract}

\noindent {\bf Keywords:}
Scalar auxiliary variable; Cahn--Hilliard equation; mass source; energy stability; convergence analysis
\vskip3mm
\noindent {\bf AMS (MOS) Subject Classification:} 35K35, 35K55, 65M12, 65Z05 

\section{Introduction}
We are interested in the stability and convergence of numerical schemes based on the scalar auxiliary variable (SAV) approach \cite{SAV1,SAV2} for Cahn--Hilliard systems of the form:
\begin{subequations}\label{CH}
\begin{alignat}{3}
\pd_t \vp & = \div (m(\vp) \nabla \mu) + f  \quad && \text{ in } \Omega \times (0,T),  \label{CH:1} \\
\mu & = - \eps \Lap \vp + \eps^{-1} F'(\vp) - g \quad && \text{ in } \Omega \times (0,T) \label{CH:2}.
\end{alignat}
\end{subequations}
In \eqref{CH}, $\vp$ and $\mu$ denote the phase field variable and its corresponding chemical potential, $\eps > 0$ is a small fixed parameter, $m(\vp)$ is a phase-dependent mobility function, $F$ is a potential function with derivative $F'$, and $f, g$ are source terms that can be prescribed or may depend on $\vp$. We furnish \eqref{CH:1}-\eqref{CH:2} with homogeneous Neumann conditions $\pdnu \vp = \pdnu \mu = 0$ on $\pd \Omega \times (0,T)$, and initial condition $\vp(0) = \vp_0$ in $\Omega$. The above system admits the following energy identity:
\begin{align}\label{CH:energy}
\frac{d}{dt} E_\eps(\vp) + \int_\Omega m(\vp) |\nabla \mu|^2 \dx = \int_\Omega f \mu + g \pd_t \vp \dx
\end{align}
involving the Ginzburg--Landau functional $
E_\eps(\vp) = \int_\Omega \frac{\eps}{2} |\nabla \vp|^2 + \frac{1}{\eps} F(\vp) \dx$. 

Under suitable choices of $f$ and $g$, such types of Cahn--Hilliard systems have seen recent applications in image inpainting \cite{Bert2}, image segmentation \cite{Yang}, phase separation of diblock copolymer \cite{Gior,Oono,Ohta}, evolution of brain metabolites concentration \cite{LiMiran}, and tumor growth \cite{Crisbook,GLSS,Wise}. Thus, efficient, stable and convergent numerical schemes for systems such as \eqref{CH} are necessary to provide meaningful simulations to these areas of applications. We refer to \cite{Aris,Fakih} for some previous numerical contributions for \eqref{CH} with $g = 0$. 

In the absence of the source terms $f$ and $g$, and the mobility $m(\vp)$ set to 1, the resulting Cahn--Hilliard equation can be viewed as a $H^{-1}$-gradient flow of the Ginzburg--Landau functional $E_\eps(\vp)$. The only nonlinearity is the derivative term $F'(\vp)$, where a typical choice for the potential function $F$ is the quartic polynomial $F(s) = \frac{1}{4}(s^2-1)^2$ that has two minima at $s = \pm 1$.  Various unconditionally stable numerical schemes have been proposed to approximate similar gradient-flow types of phase field equations, among which we mention the Convex-concave splitting (CCS) approach \cite{EllStu,Eyre}; the Stabilized linearly implicit (SLI) approach \cite{ShenYang}; the Invariant energy quadratization (IEQ) approach \cite{IEQ:diblock} \tc{(which is based on an earlier Lagrange multiplier approach \cite{IEQ})}; and the Scalar auxiliary variable approach \cite{SAV1,SAV2} which is based on the simple idea of introducing a scalar variable 
\begin{align}\label{scalarvar}
q(t) = \Big ( \int_\Omega \frac{1}{\eps} F(\vp(t)) \dx + C_0 \Big )^{1/2} =: Q(\vp(t)) > 0,
\end{align}
for potentials $F$ such that $\int_\Omega \frac{1}{\eps} F(\vp) \dx$ is bounded strictly from below by $-C_0$ with constant $C_0 > 0$, so that an ordinary differential equation can be obtained:
\[
q_t = \frac{1}{2 \eps Q(\vp)} \int_\Omega F'(\vp) \pd_t \vp \dx, \quad q(0) = Q(\vp_0).
\]
The equivalent SAV formulation of \eqref{CH} in strong form reads as 
\begin{subequations}\label{CH:alt}
\begin{alignat}{3}
\pd_t \vp& = \div (m(\vp) \nabla \mu) + f  \quad && \text{ in } \Omega \times (0,T), \\
\mu & = - \eps \Lap \vp + \frac{q(t)}{\eps Q(\vp)} F'(\vp) - g \quad && \text{ in } \Omega \times (0,T), \label{CH:alt:2}  \\
q_t & = \frac{1}{2 \eps Q(\vp)} \int_\Omega F'(\vp) \pd_t \vp \dx \quad  && \text{ in } (0,T), \label{CH:alt:3}
\end{alignat}
\end{subequations}
which admits an analogous energy identity to \eqref{CH:energy} but with $E_\eps$ replaced by a modified energy $G_\eps = \frac{\eps}{2} \| \nabla \vp \|^2 + |q|^2$. A first order time discretization would take $F'(\vp)/(\eps Q(\vp))$ explicitly and $q$ implicitly when approximating \eqref{CH:alt:2}, yielding a numerical scheme that is linear in the next iteration for $(\vp, \mu, q)$. In comparison to the CCS approach which results in nonlinear discrete systems, or the SLI approach which requires a quadratic truncation of $F$ and large stabilization parameters, the IEQ and SAV approaches allow for unconditionally stable linear schemes when applied to gradient flows \cite{SAV2,Shen:IEQ} without the need to modify the potential $F$. Between the latter two, the SAV approach is viewed as an enhancement of the IEQ approach, whose main idea of introducing an auxiliary function $u = \sqrt{F(\vp)}$ and replacing $(q(t)F'(\vp))/(\eps Q(\vp))$ with $(F'(\vp) u)/(\eps \sqrt{F(\vp)})$ in \eqref{CH:alt:2}, and replacing \eqref{CH:alt:3} with a PDE $\pd_t u = (F'(\vp) \pd_t \vp)/(2\sqrt{F(\vp)})$ demands $F$ to be a strictly positive potential function.

Since its introduction, many variants and improvements of the SAV approach have been developed, among which we mention the Lagrange multiplier approach \cite{Cheng} that dissipates the original energy as opposed to a modified energy; the relaxed SAV approach \cite{RSAV} penalizing large differences between $q^n$ and $Q(\vp^n)$ at the discrete level; SAV variants \cite{Hou,Liu} removing the lower bound assumption on $\int_\Omega F(\vp) \dx$; generalized SAV (GSAV) \cite{GSAV,Roadmap} and GSAV with relaxation (R-GSAV) \cite{Zhang} approaches applicable to general dissipative systems of the form
\begin{align}\label{diss}
\pd_t \phi + \mathcal{A}(\phi) + g(\phi) = 0 \quad \text{ such that } \quad \frac{d}{dt} E(\phi) = - \mathcal{K}(\phi),
\end{align}
with positive operator $\mathcal{A}$, semi-linear/quasi-linear operator $g$, free energy functional $E$ and dissipative functional $\mathcal{K}$ such that $\mathcal{K}(\phi) > 0$ for all $\phi$.

Concerning the numerical analysis, assuming sufficient regularity on the solution to the underlying PDE, which is either a gradient flow of some energy functional or a dissipative system of the form \eqref{diss}, first error estimates for the standard SAV approach were established in \cite{SAVconv:grad,SAVconv} for the Allen--Cahn and Cahn--Hilliard equations; in \cite{Li:CHS} for a Cahn--Hilliard--Stokes system; in \cite{Zheng:CHHS} for a Cahn--Hilliard-Hele-Shaw system. For the GSAV and R-GSAV approaches, error analysis with backwards differentiation formula (BDF) time stepping can be found in \cite{GSAV:err,Zhang}.  On the other hand, convergence of discrete solutions without the assumed smoothness of the continuous solution can  be found in \cite{SAVconv} for the $L^2$- and $H^{-1}$-gradient flow of $E_\eps$, and in \cite{Metzger} for the bulk-surface Cahn--Hilliard system of \cite{KLLM}.

The motivation of the present work stems from the observations that (i) Cahn--Hilliard systems with mass sources are becoming ubiquitous in various areas of applied sciences; (ii) for a generic source term $f = f(x)$ or $f = f(x,\vp)$, the system \eqref{CH} is no longer a gradient flow of $E_\eps$, and thus new ideas are needed to establish the stability of SAV discrete solutions that was automatically guaranteed in previous works. In our analysis we notice that if $f \in L^2(\Omega)$, or $|f(\vp)| \leq C(1 + |\vp|)$ having linear growth, then stability estimates require the potential $F$ to have at most quadratic growth (see Section~\ref{sec:discF}). This restriction has also been observed in \cite{GarLam,GaTr} for a Cahn--Hilliard tumor model \eqref{Tumor} with non-constant mobility function $m(\vp)$. This is similar to the SLI approach where $F$ needs to be modified so that $\| F'' \|_{L^\infty} < \infty$.  However, as we recall one of the advantages of the SAV approach over the SLI approach is that $F$ needs no ad-hoc modification, the basis of our contribution is to provide stability estimates for SAV discrete solutions to \eqref{CH} whilst keeping $F$ unmodified, which can be attained when $f$ is a bounded function. This boundedness assumption is not restrictive, as seen from several applications of \eqref{CH} outlined in Section~\ref{sec:appl}, and it is possible to extend our stability analysis to Cahn--Hilliard systems with non-bounded sources $f = f(\vp)$ provided the time evolution of the mean value of $\vp$ can be controlled.

Our main contributions are as follows: (i) we formulate a linear numerical scheme for \eqref{CH} based on the RSAV approach of \cite{RSAV}, and demonstrate first results on the stability of discrete solutions for bounded $f$ and potential $F$ that admits quartic polynomial growth; (ii) we prove first convergence results for the time discrete solutions to a weak solution of \eqref{CH}.  To the best of our knowledge, SAV schemes for \eqref{CH} has not received much attention in the literature, which we attribute to the lack of an obvious Lyapunov functional, or a dissipative structure akin to \eqref{diss} for the equation. Furthermore, we are not aware of any results concerning the convergence of RSAV schemes of \cite{RSAV} (besides the R-GSAV variant in \cite{Zhang} for dissipative systems). As a consequence, notable modified Cahn--Hilliard models in the literature for diblock co-polymer phase separation, tumor growth, image inpainting and segmentation can now be approach numerically with the SAV framework, thereby extending its applicability to systems that need not exhibit a dissipative structure.

\textbf{Plan of the paper}: We present the RSAV time discretization for \eqref{CH} in Section~\ref{sec:SAV}, and establish stability estimates.  Section~\ref{sec:conv} is devoted to the convergence analysis of the time discrete solutions, and we discuss supporting numerical results in Section~\ref{sec:num}. In Section~\ref{sec:appl} we outline SAV schemes, their stability and numerical simulations for Cahn--Hilliard system with mass source in biological science and image processing.

\textbf{Notation}: For any $p \in [1,\infty]$ and $k >0$, the standard Lebesgue and Sobolev spaces over $\Omega$ are denoted by $L^p(\Omega)$ and $W^{k,p}(\Omega)$ with the corresponding norms $\no{\cdot}_{L^p}$ and $\no{\cdot}_{W^{k,p}}$.  In the special case $p = 2$, these become Hilbert spaces and we employ the notation $H^k := H^k(\Omega) = W^{k,2}(\Omega)$ with the corresponding norm $\no{\cdot}_{H^k}$. We denote the topological dual of $H^1(\Omega)$ by $(H^1(\Omega))^*$ and the corresponding duality pairing by $\inn{\cdot}{\cdot}$.  We use $\no{\cdot}$ and $(\cdot,\cdot)$ for the norm and inner product of $L^2(\Omega)$.

\section{RSAV scheme}\label{sec:SAV}
\subsection{First order time discretization}
Dividing the time interval $[0,T]$ into a uniform partition of subintervals $[t^{n-1}, t^{n}]$ with $\tau = t^{n} - t^{n-1}$, for $n = 1, \dots, N_\tau$, where $\tau N_\tau \leq T$. Denoting by $f^k, g^k$ as approximations of $f , g$ evaluated at $t = t^k$ for $k = 1, \dots, N_\tau$, then a first order time discretization of \eqref{CH:alt} based on the SAV approach with relaxation proposed in \cite{RSAV} reads as follows: Let $\tau > 0$ denote a fixed time step and given $(\vp^{n-1}, q^{n-1}, f^{n-1} ,g^{n-1})$, find $(\vp^{n}, \mu^{n}, r^{n})$ satisfying (in a suitable sense)
\begin{subequations}\label{CH:dis}
\begin{alignat}{2}
\vp^{n} - \vp^{n-1} & = \tau\, \div (m(\vp^{n-1}) \nabla \mu^{n}) + \tau f^{n-1}, \label{dis:1} \\
\mu^{n} & = - \eps \Lap \vp^{n} + \frac{r^{n}}{\eps Q(\vp^{n-1})} F'(\vp^{n-1}) - g^{n-1}, \label{dis:2} \\
r^{n} - q^{n-1} & = \frac{1}{2 \eps Q(\vp^{n-1})} (F'(\vp^{n-1}), \vp^{n} - \vp^{n-1}), \label{dis:3}
\end{alignat}
\end{subequations}
initialized with $\vp^0 = \vp_0$ and $q^0 = Q(\vp_0)$. Then, we define 
\begin{align}\label{dis:4}
q^n :=  \zeta_n^\tau r^n + (1-\zeta_n^\tau) Q(\vp^{n})
\end{align}
for some $\zeta_n^\tau \in \V_{n}^\tau$, where given $\tau, \mu^n, r^n$ and manually assigned fixed constants $\eta \in (0,1)$ and $M > 0$, the set $\V_{n}^\tau$ is defined as the set of constants $\zeta \in [0,1]$ such that 
\begin{equation}\label{dis:5}
\begin{aligned}
0 \geq R_n^\tau(\zeta) &:= |\zeta r^n + (1-\zeta) Q(\vp^{n})|^2 + \frac{1}{2} |\zeta r^n + (1-\zeta) Q(\vp^{n}) - q^{n-1}|^2  \\
 & \quad - \tau \eta m_0 \| \nabla \mu^n \|^2 - |r^n|^2 - \frac{1}{2} |r^n - q^{n-1}|^2 - \tau M,
\end{aligned}
\end{equation}
\tc{where $m_0$ is the positive lower bound on the mobility function, see \eqref{ass:const:m} below}. Our definition of $\V_n^\tau$ differs slightly from \cite{RSAV}. The standard SAV method is obtained when $\zeta = 1$. Note that $R_n(1) \leq - \tau M < 0$ and so $1 \in \V_n^\tau$, i.e., $\V_n^\tau$ is non-empty.  In fact, $R_n^\tau(\zeta) = a_n^\tau \zeta^2 + b_n^\tau \zeta + c_n^\tau$ with coefficients
\begin{align*}
a_n^\tau & = \frac{3}{2}(r^n - Q(\vp^n))^2, \quad b_n^\tau = (r^n - Q(\vp^n))(Q(\vp^n) - q^{n-1}), \\
c_n^\tau & = |Q(\vp^n)|^2 - |r^n|^2 + \frac{1}{2}[|Q(\vp^n) - q^{n-1}|^2 - |r^n - q^{n-1}|^2] - \tau \eta m_0 \| \nabla \mu^n \|^2 - \tau M.
\end{align*}
Since $R_n^\tau(1) < 0$, by continuity we can find $w_n^\tau \in [0,1)$ such that $R_n^\tau(\zeta) \leq 0$ for all $\zeta \in [w_n^\tau,1]$, and so we can pick any $\zeta_n^\tau \in (w_n^\tau,1]$ for the update rule \eqref{dis:4}.

\begin{remark}\label{rmk:ideal}
It would be ideal to pick $\zeta_n^\tau = 0$ so that $q^n = Q(\vp^n)$, \tc{which we term as the idealized update approach when discussing convergence in Section \ref{sec:conv} below}.  However, for the Cahn--Hilliard system \eqref{CH} with generic mass source $f$, we cannot guarantee if $R_n^\tau(0) \leq 0$ is satisfied \tc{for all $n = 1, \dots, N_\tau$} as there is no available comparison between $|Q(\vp^n)|^2$ and $|r^n|^2$ using the discrete energy inequality \eqref{stab:1} of \eqref{CH:dis}. For the numerical simulations performed in \cite{RSAV}, the following optimal value for $\zeta_n^\tau$ is suggested:
\begin{align}\label{RSAV:opt}
\zeta_n^\tau = \max \Big (0, \frac{-b_n^\tau - \sqrt{(b_n^\tau)^2 - 4a_n^\tau c_n^\tau}}{2a_n^\tau} \Big ),
\end{align}
where the values of $a_n^\tau$, $b_n^\tau$ and $c_n^\tau$ at each iteration are defined above. In our numerical tests reported in Section~\ref{sec:num}, we found that \tc{the optimal value} $\zeta_n^\tau = 0$ \tc{can be achieved} with moderate values of $M$ and $\eta$.
\end{remark}

\subsection{Stability estimates}
We make the following assumptions:
\begin{enumerate}[label=$(\mathrm{A \arabic*})$, ref = $\mathrm{A \arabic*}$]
\item \label{ass:dom} $\Omega \subset \R^{d}$, $d \in \{2,3\}$ is a bounded domain with convex or $C^{1,1}$-boundary $\pd \Omega$.
\item \label{ass:const:m} The constant $\eps > 0$ is fixed, and the mobility function $m \in C^0(\R)$ satisfies
\[
m_0 \leq m(s) \leq m_1 \quad \forall s \in \R
\]
for some constants $m_1 \geq m_0 > 0$.
\item \label{ass:ini} The initial condition satisfies $\phi_0 \in H^1(\Omega)$.
\item \label{ass:g} The function $g: \Omega \times (0,T) \to \R$ satisfies $g \in L^\infty(0,T;L^2(\Omega))$.
\item \label{ass:Ff} The potential $F$ is a non-negative function with $F \in C^1(\R)$, and there exist constants $c_0 > 0$, $c_1 \in \R$, $c_2 > 0$ such that 
\[
c_0 |s|^3 - c_1 \leq |F'(s)| \leq c_2 (1 + |s|^3) \quad \forall s \in \R,
\]
and the source term $f: \Omega \times (0,T) \to \R$ satisfies $f \in L^\infty(0,T;L^\infty(\Omega))$.
\end{enumerate}
In light of \eqref{ass:g} and \eqref{ass:Ff} for the source functions, we introduce the Steklov averages $f^\tau$ and $g^\tau$ defined as
\[
f^\tau(t,x) = \frac{1}{\tau} \int_t^{t+\tau} \tilde{f}(y, x) \dy, \quad g^\tau(t,x) = \frac{1}{\tau} \int_t^{t+\tau} \tilde{g}(y,x) \dy
\]
for $t \in [0,T-\tau]$, where $\tilde{f}$ and $\tilde{g}$ are zero extensions of $f$ and $g$ on $t \in \R \setminus [0,T]$. Then, well-known properties of Steklov averages yield $f^\tau \in C^0([0,T];L^\infty(\Omega))$, $g^\tau \in C^0([0,T];L^2(\Omega))$, $\| f^\tau \|_{L^\infty(0,T;L^\infty)} \leq \| f \|_{L^\infty(0,T;L^\infty)}$, $\| g^\tau \|_{L^\infty(0,T;L^2)} \leq \| g \|_{L^\infty(0,T;L^2)}$, with $f^\tau \to f$ in $L^r(0,T;L^\infty(\Omega))$ and $g^\tau \to g$ in $L^r(0,T;L^2(\Omega))$ as $\tau \to 0$ for any $1 \leq r < \infty$. Hence, we can choose $f^{n-1}(x) := f^\tau(t^{n-1},x)$ and $g^{n-1}(x) := g^\tau(t^{n-1},x)$ in \eqref{CH:dis}.

\begin{remark}[Unique solvability]
The existence of time discrete solutions to \eqref{CH:dis} can be inferred almost analogously as in Section~\ref{sec:FE} by replacing the corresponding matrices with appropriate differential operators, and so we omit the details. Concerning uniqueness of solutions, as \eqref{CH:dis} is linear in $(\vp^{n}, \mu^{n}, r^{n})$, the differences denoted by $(\vp, \mu, r)$ between any two solution triplets satisfy formally in the strong sense
\[
\vp = \tau \div(m(\vp^{n-1}) \nabla \mu), \quad \mu = -\eps \Lap \vp + \frac{r F'(\vp^{n-1})}{\eps Q(\vp^{n-1})}, \quad r = \frac{(F'(\vp^{n-1}), \vp)}{2 \eps Q(\vp^{n-1})}.
\]
Testing with $\mu$, $\vp$ and $2r$, respectively, and upon summing we get
\[
\tau \int_\Omega m(\vp^n) |\nabla \mu|^2 \dx + \eps \| \nabla \vp \|^2 +2 |r|^2 = 0.
\]
Hence $r = 0$, while $\vp$ and $\mu$ are constants. Since $\nabla \mu = \bm{0}$, we infer from the first equation $\vp = 0$, and consequently $\mu = 0$. Then, $q^n$ is uniquely determined from \eqref{dis:4}.
\end{remark}

\begin{remark}
\eqref{ass:Ff} essentially requires that the potential $F$ has quartic polynomial growth at infinity, which is fulfilled by the classical example $F(s) = \frac{1}{4}(s^2-1)^2$.  We show in Remark \ref{rem:sharp} that this seems to be sharp for our main stability result.
\end{remark}

\begin{remark}
It is also possible to consider $g \in L^2(0,T;H^1(\Omega))$ as oppose to \eqref{ass:g}, since we can define a new variable $\hat \mu := \mu + g$ and shift $g$ from \eqref{CH:2} into the diffusion term of \eqref{CH:1}, see Section~\ref{sec:Tumor} for an example.
\end{remark}

Our main result is the following stability estimate for the time discrete solutions.
\begin{theorem}[Stability]\label{thm:stab}
Suppose, for any $\tau \in (0,1)$, the time discrete system \eqref{CH:dis} is solvable.  Then, under \eqref{ass:dom}-\eqref{ass:Ff}, there exists $\tau_* \in (0,1)$ depending only on model parameters, such that for all $\tau \in (0,\tau_*)$, the following estimate holds for the corresponding discrete solutions $\{\vp^k, \mu^k, r^k, q^k \}_{k=1}^{N_\tau}$, where $\tau N_\tau \leq T$, with a positive constant $C$ independent of $\tau$ and $N_\tau$:
\begin{equation}\label{stab}
\begin{aligned}
& \| \vp^k \|_{H^1}^2 + |q^k|^2 + |r^k|^2 + \sum_{n=1}^{k} \Big ( \| \vp^{n} - \vp^{n-1} \|_{H^1}^2 + |r^{n} - q^{n-1} |^2 + |q^{n} - q^{n-1} |^2 \Big ) \\
& \quad + \tau \sum_{n=1}^{k}  \Big (\| \mu^{n} \|_{H^1}^2 + \| \vp^{n} \|_{H^2}^4 \Big) + \sum_{n=1}^{k} \frac{1}{\tau} \|\vp^{n} - \vp^{n-1}\|_{(H^1)^*}^2 \leq C,
\end{aligned}
\end{equation}
and for any $h \in \{2, \dots, N_\tau\}$,
\begin{align}\label{timediff}
\tau \sum_{n=0}^{N_\tau - 1} \| \vp^{n+1} - \vp^n \|^2 \leq C\tau^{3/2}, \quad \tau \sum_{n=0}^{N_\tau - h} \| \vp^{n+h} - \vp^n \|^2 \leq C h \tau.
\end{align}
\end{theorem}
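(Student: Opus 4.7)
The plan is to derive a discrete energy recursion by testing the three equations of \eqref{CH:dis}, convert the $r^n$-contributions to $q^n$-contributions using the RSAV constraint \eqref{dis:5}, control the two source pairings, and close via discrete Gronwall, with the $H^2$-bound and the time-difference estimates \eqref{timediff} following as corollaries.

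For the discrete balance I would test \eqref{dis:1} by $\mu^n$, \eqref{dis:2} by $\vp^n - \vp^{n-1}$, and use \eqref{dis:3} to replace the nonlinear pairing $(F'(\vp^{n-1}), \vp^n - \vp^{n-1})/(\eps Q(\vp^{n-1}))$ by $2(r^n - q^{n-1})$. Equating the two expressions for $(\mu^n, \vp^n - \vp^{n-1})$ and applying the binomial identity $2a(a-b) = a^2 - b^2 + (a-b)^2$ to both $\eps(\nabla\vp^n, \nabla(\vp^n - \vp^{n-1}))$ and $2r^n(r^n - q^{n-1})$ yields an equality with $\tfrac{\eps}{2}(\|\nabla\vp^n\|^2 - \|\nabla\vp^{n-1}\|^2 + \|\nabla(\vp^n - \vp^{n-1})\|^2) + |r^n|^2 - |q^{n-1}|^2 + |r^n - q^{n-1}|^2 + \tau m_0\|\nabla\mu^n\|^2$ on the left and $\tau(f^{n-1}, \mu^n) + (g^{n-1}, \vp^n - \vp^{n-1})$ on the right. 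The constraint \eqref{dis:5} on $\zeta_n^\tau$ then converts $|r^n|^2 + \tfrac12|r^n - q^{n-1}|^2$ into $|q^n|^2 + \tfrac12|q^n - q^{n-1}|^2 + \tfrac12|r^n - q^{n-1}|^2$ modulo the slack $-\tau\eta m_0\|\nabla\mu^n\|^2 - \tau M$, leaving an effective friction $(1-\eta)\tau m_0\|\nabla\mu^n\|^2$ on the left and $+\tau M$ on the right.

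Controlling the source pairings is the main work. For $\tau(f^{n-1}, \mu^n)$, I split $\mu^n = (\mu^n - \bar\mu^n) + \bar\mu^n$; Poincaré plus the $L^\infty$-bound on $f$ absorbs the oscillation into the friction. For $\bar\mu^n$, integrating \eqref{dis:2} expresses it in terms of $r^n$, $\int F'(\vp^{n-1})$, $Q(\vp^{n-1})$, and $\bar g^{n-1}$: under \eqref{ass:Ff} one has $F(s) \geq c|s|^4 - C$, so for $C_0$ in \eqref{scalarvar} large enough $Q(\vp) \geq c(1 + \|\vp\|_{L^4}^2)$, hence $\|F'(\vp)\|_{L^1}/Q(\vp) \leq C(1 + \|\vp\|_{L^4}) \leq C(1 + \|\vp\|_{H^1})$ by Sobolev in $d \leq 3$; Young's inequality with $|r^n|^2 \leq 2|q^{n-1}|^2 + 2|r^n - q^{n-1}|^2$ then absorbs into the LHS at the cost of $C\tau(1 + \|\vp^{n-1}\|_{H^1}^2 + \|g^{n-1}\|^2)$. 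The harder pairing $(g^{n-1}, \vp^n - \vp^{n-1})$, with only $g \in L^\infty L^2$, is treated via the interpolation $\|v\|^2 \leq \|v\|_{H^1}\|v\|_{(H^1)^*}$ combined with the duality bound $\|\vp^n - \vp^{n-1}\|_{(H^1)^*} \leq C\tau(\|\nabla\mu^n\| + 1)$ read off \eqref{dis:1}; Cauchy-Schwarz yields $|(g^{n-1}, \vp^n - \vp^{n-1})| \leq C\|g^{n-1}\|\tau^{1/2}\|\vp^n - \vp^{n-1}\|_{H^1}^{1/2}(\|\nabla\mu^n\| + 1)^{1/2}$, and a Young inequality with carefully chosen exponents produces absorbable $\delta\|\nabla(\vp^n - \vp^{n-1})\|^2$ and $\delta'\tau\|\nabla\mu^n\|^2$ contributions plus a summable remainder of order $\tau(1 + \|g^{n-1}\|^2)$, bounded via the Steklov control by $T(1 + \|g\|_{L^\infty L^2}^2)$. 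Discrete Gronwall applied to the resulting recursion $A^n \leq (1 + C\tau)A^{n-1} + C\tau$ with $A^n := |q^n|^2 + \tfrac{\eps}{2}\|\nabla\vp^n\|^2$ for $\tau \leq \tau_*$ small yields the leading quantities in \eqref{stab}.

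The $H^2$-estimate comes from reading \eqref{dis:2} as the elliptic problem $-\eps\Lap\vp^n = \mu^n + g^{n-1} - r^n F'(\vp^{n-1})/(\eps Q(\vp^{n-1}))$ with $\pdnu\vp^n = 0$, invoking $H^2$ elliptic regularity from \eqref{ass:dom} and controlling the RHS in $L^2$ via Sobolev $H^1 \hookrightarrow L^6$; raising to the fourth power and summing against $\tau$ uses the bound $\tau\sum\|\mu^n\|_{H^1}^2 \leq C$, itself following from Poincaré on $\mu^n - \bar\mu^n$ together with the mean estimate. The weak-norm bound in \eqref{stab} is immediate from the $(H^1)^*$-duality estimate. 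Finally, \eqref{timediff} follows from $\|v\|^2 \leq \|v\|_{H^1}\|v\|_{(H^1)^*}$: discrete Cauchy-Schwarz gives $\sum\|\vp^{n+1}-\vp^n\|^2 \leq (\sum\|\vp^{n+1}-\vp^n\|_{H^1}^2)^{1/2}(\sum\|\vp^{n+1}-\vp^n\|_{(H^1)^*}^2)^{1/2} = O(\tau^{1/2})$, and for the $h$-step version one expresses $\vp^{n+h} - \vp^n$ as a telescoping sum, bounds its $(H^1)^*$-norm by the sum of the atomic $(H^1)^*$-norms, and exploits that each atomic index appears in at most $h$ outer sums to obtain $\tau\sum_n\|\vp^{n+h} - \vp^n\|^2 \leq Ch\tau$. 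The hard part is the $g$-pairing in Step 3: balancing the Young exponents so that the absorbed terms exactly consume the available friction while all remainders sum to $O(1)$ uniformly in $\tau$.
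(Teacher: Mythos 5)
Your overall architecture (discrete energy identity from the test functions, conversion of the $r^n$-terms to $q^n$-terms via \eqref{dis:5}, mean-value estimate for $\mu^n$ through $F'/Q$, discrete Gronwall, then the $H^2$, dual-norm and time-translation estimates) is the paper's, and your treatment of the $F'/Q$ quotient, the elliptic $H^2$ step and \eqref{timediff} is sound. But the step you yourself single out as the hard one --- the pairing $(g^{n-1},\vp^n-\vp^{n-1})$ --- does not close as you describe. You claim a Young inequality turning $C\|g^{n-1}\|\,\tau^{1/2}\|\vp^n-\vp^{n-1}\|_{H^1}^{1/2}(\|\nabla\mu^n\|+1)^{1/2}$ into $\delta\|\nabla(\vp^n-\vp^{n-1})\|^2+\delta'\tau\|\nabla\mu^n\|^2+C\tau(1+\|g^{n-1}\|^2)$. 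A homogeneity count in $\tau$ rules this out: take $\|g^{n-1}\|\sim 1$, $\|\nabla\mu^n\|\sim 1$ and $\|\vp^n-\vp^{n-1}\|_{H^1}\sim\tau^{1/2}$, all consistent with the a priori bounds; the left-hand side is then of order $\tau^{3/4}$ while your right-hand side is of order $\tau$, so the inequality fails for small $\tau$. Structurally, producing $\delta X^2$, $\delta'\tau Y^2$ and $C\tau\|g\|^2$ from the factors $X^{1/2}$, $Y^{1/2}$, $\|g\|$ requires a prefactor $\tau^{0}\cdot\tau^{1/4}\cdot\tau^{1/2}=\tau^{3/4}$, whereas only $\tau^{1/2}$ is available. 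The paper does not attempt an $O(\tau)$ remainder here: in \eqref{stab:1:rhs} it applies the plain Young inequality $(g^{n-1},\vp^n-\vp^{n-1})\le C\|g^{n-1}\|^2+\tfrac14\|\vp^n-\vp^{n-1}\|^2$, absorbs the second piece into the $\tfrac12\|\vp^n-\vp^{n-1}\|^2$ dissipation already present on the left of \eqref{stab:1}, and carries the unweighted sum $C\sum_n\|g^{n-1}\|^2$ into the Gronwall data. You should follow that route (or strengthen the hypothesis on $g$); the interpolation--duality detour buys nothing and is not valid as stated.

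A second, more easily repaired omission: your Gronwall quantity $A^n=|q^n|^2+\tfrac{\eps}{2}\|\nabla\vp^n\|^2$ contains no $L^2$-norm of $\vp^n$, yet the mean-value estimate for $\mu^n$ (and the bound $\|\vp^k\|_{H^1}^2\le C$ asserted in \eqref{stab}) needs the full $\|\vp^{n-1}\|_{H^1}^2$, so the recursion $A^n\le(1+C\tau)A^{n-1}+C\tau$ does not follow from what you have written. The paper closes this by additionally testing \eqref{dis:1} with $\vp^n$, which places $\tfrac12\|\vp^n\|^2$ inside the discrete energy $G^n$ of \eqref{def:G} at the price of the extra cross term $-\tau(m(\vp^{n-1})\nabla\mu^n,\nabla\vp^n)$, itself absorbed by Young into the $\tau\|\nabla\mu^n\|^2$ dissipation. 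Alternatively, integrating \eqref{dis:1} over $\Omega$ gives $(\vp^n,1)=(\vp^{n-1},1)+\tau(f^{n-1},1)$, hence $|(\vp^n,1)|\le C$ by the boundedness of $f$, and the Poincar\'e--Wirtinger inequality recovers $\|\vp^n\|\le C(1+\|\nabla\vp^n\|)$; one of these devices must be added for the argument to close.
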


\begin{proof}
Let $\{\vp^k, \mu^k, r^k, q^k \}_{k=1}^{N_\tau}$ denote the discrete solutions to \eqref{CH:dis}-\eqref{dis:4} originating from the initial conditions $\vp^0 := \vp_0$ and $q^0 := Q(\vp_0)$. For $k = n$, we test \eqref{dis:1} with $\mu^{n}$ and also with $\vp^{n}$, \eqref{dis:2} with $\vp^{n} - \vp^{n-1}$ and \eqref{dis:3} with $2r^{n}$. Then, employing the identity $(a-b)a = \tfrac{1}{2}(a^2 - b^2 + (a-b)^2)$ we first obtain upon summing
\begin{align*}
& \frac{1}{2} \Big ( \| \vp^n \|^2 + \eps \| \nabla \vp^n \|^2 + 2 |r^n|^2 \Big ) - \frac{1}{2}\Big ( \| \vp^{n-1} \|^2 + \eps \| \nabla \vp^{n-1} \|^2 + 2|q^{n-1}|^2 \Big ) \\
& \qquad + \frac{1}{2} \Big ( \| \vp^n - \vp^{n-1} \|^2 + \eps \| \nabla (\vp^{n} - \vp^{n-1}) \|^2 \Big ) + |r^{n} - q^{n-1}|^2 + \int_\Omega \tau m(\vp^{n-1}) |\nabla \mu^n|^2 \dx \\
& \quad = \tau (f^{n-1}, \mu^n) + \tau(f^{n-1}, \vp^n) + (g^{n-1}, \vp^n - \vp^{n-1}) - \tau(m(\vp^{n-1}) \nabla \mu^n, \nabla \vp^n).
\end{align*}
Combining with the following inequality derived from \eqref{dis:4} and \eqref{dis:5}
\[
|q^n|^2 + \frac{1}{2}|q^n - q^{n-1}|^2 \leq \tau M + \tau \eta m_0 \| \nabla \mu^n\|^2 + |r^n|^2 + \frac{1}{2}|r^n - q^{n-1}|^2,
\]
we obtain the inequality 
\begin{equation}\label{stab:1}
\begin{aligned}
& G^{n} - G^{n-1} + \frac{1}{2} \| \vp^{n} - \vp^{n-1} \|^2 + \frac{\eps}{2}\|\nabla (\vp^{n} - \vp^{n-1})\|^2 + \frac{1}{2} |q^n - q^{n-1}|^2 \\
& \qquad + \frac{1}{2} |r^{n} - q^{n-1}|^2 + \int_\Omega (1-\eta) \tau m_0  |\nabla \mu^{n}|^2 \dx \\
& \quad \leq  \tau (f^{n-1}, \mu^{n}) + \tau M + \tau (f^{n-1}, \vp^{n}) + (g^{n-1}, \vp^{n} - \vp^{n-1}) \\
& \qquad- \tau (m(\vp^{n-1}) \nabla \mu^{n}, \nabla \vp^{n}),
\end{aligned}
\end{equation}
where
\begin{align}\label{def:G}
G^k := \frac{1}{2} \| \vp^k \|^2 + \frac{\eps}{2} \| \nabla \vp^k \|^2 + |q^k|^2
\end{align}
plays the role of a discrete energy functional. The latter three terms on the right-hand side of \eqref{stab:1} can be handled in a standard way, where for the remainder of the proof, the symbol $C$ denotes a generic constant independent of $\tau$ and $n \in \{1, \dots, N_\tau\}$ \tc{whose value may change line from line and within the same line}:
\begin{equation}\label{stab:1:rhs}
\begin{aligned}
 & \tau (f^{n-1}, \vp^{n}) + (g^{n-1}, \vp^{n} - \vp^{n-1}) - \tau (m(\vp^{n-1}) \nabla \mu^{n}, \nabla \vp^{n})\\
& \quad \leq C \tau  + C \tau \| \vp^{n} \|_{H^1}^2 + C \| g^{n-1} \|^2 + \frac{1}{4} \| \vp^{n} - \vp^{n-1} \|^2 + \frac{\tau(1-\eta) m_0}{4} \|\nabla \mu^{n} \|^2.
\end{aligned}
\end{equation}
For the first term we use of the boundedness of $f$ and the Poincar\'e inequality to infer
\begin{equation}\label{mu:trick}
\begin{aligned}
|(f^{n-1}, \mu^{n})| & \leq C \| \mu^{n} \|_{L^1} \leq C \| \nabla \mu^{n} \|_{L^2} + C | (\mu^{n},1)| \\
& \leq C + \frac{(1-\eta) m_0}{4} \| \nabla \mu^{n} \|^2 + C|(\mu^{n},1)|.
\end{aligned}
\end{equation}
To close the estimate, it remains to estimate the mean value of $\mu^{n}$, where from \eqref{dis:2},
\begin{align}\label{mean:mu}
C|(\mu^{n},1)| \leq C\frac{|r^{n}|}{\eps Q(\vp^{n-1})} |(F'(\vp^{n-1}),1)| + C|(g^{n-1},1)|.
\end{align}
By virtue of \eqref{ass:Ff}, we have $|F'(s)| \leq C F(s)^{\frac{3}{4}} + C$ and so
\begin{align}\label{F:F'}
|(F'(\vp^{n-1}),1)| \leq C (F(\vp^{n-1}),1)^{\frac{3}{4}} + C.
\end{align}
Combining with the facts $\eps Q(\vp^{n-1})^2 = (F(\vp^{n-1}),1) + \eps C_0$ and $Q(\vp^{n-1}) \geq \sqrt{C_0}$, and the Sobolev embedding $H^1(\Omega) \subset L^4(\Omega)$, we deduce the following key estimate:
\begin{equation}\label{mean:mu:1}
\begin{aligned}
C \frac{|r^{n}| |(F'(\vp^{n-1}),1)|}{\eps Q(\vp^{n-1})} & \leq C \frac{|r^{n}| ( (F(\vp^{n-1}), 1)^{\frac{3}{4}} + 1 )}{Q(\vp^{n-1})}  \leq C |r^{n}| \, (Q(\vp^{n-1})^{\frac{1}{2}} + 1) \\
&  \leq \frac{1}{8} |r^{n}|^2 + C \big ( 1 + Q(\vp^{n-1}) \big )  \leq \frac{1}{8} |r^{n}|^2 + C \big ( 1 + (F(\vp^{n-1}), 1)^{1/2} \big ) \\
&   \leq  \frac{1}{8} |r^{n}|^2 + C \big ( 1 +  \| \vp^{n-1} \|_{L^4}^2 \big ) \leq \frac{1}{8} |r^{n}|^2 + C \big ( 1 +  \| \vp^{n-1} \|_{H^1}^2 \big ).
\end{aligned}
\end{equation}
Substituting into \eqref{mean:mu} yields the following estimate for the mean value
\begin{equation}\label{mean:mu:est}
\begin{aligned}
C |(\mu^{n},1)| & \leq C + C \| g^{n-1} \|^2 + \frac{1}{8} |r^{n}|^2 + C \| \vp^{n-1} \|_{H^1}^2 \\
& \leq C + C\| g^{n-1} \|^2 + \frac{1}{4} |r^n - q^{n-1}|^2 + C |q^{n-1}|^2 + C \| \vp^{n-1} \|_{H^1}^2,
\end{aligned}
\end{equation}
and consequently
\begin{equation}\label{stab:1:rhs:1}
\begin{aligned}
\tau |(f^{n-1},\mu^{n})| & \leq C \tau + \frac{(1-\eta)\tau m_0}{4} \| \nabla \mu^{n}\|^2 + \frac{\tau}{4} |r^n - q^{n-1}|^2 \\
& \quad + C \tau \| g^{n-1} \|^2 + C \tau |q^{n-1}|^2 + C \tau \| \vp^{n-1} \|_{H^1}^2.
\end{aligned}
\end{equation}
Using \eqref{stab:1:rhs} and \eqref{stab:1:rhs:1} in \eqref{stab:1}, we arrive at the following:
\begin{align*}
& G^{n} +  \frac{\min(1,2\eps)}{4} \| \vp^{n} - \vp^{n-1} \|_{H^1}^2 + \frac{1-\tau}{4}  |r^{n} - q^{n-1}|^2 + \frac{1}{2} |q^n - q^{n-1}|^2 + \frac{m_0 (1-\eta) \tau}{2} \| \nabla \mu^{n} \|^2 \\
& \quad \leq G^{n-1} + C \tau ( 1 + \| g^{n-1} \|^2 + G^{n-1} + G^n) + C \| g^{n-1} \|^2.
\end{align*}
For $\tau < 1$ and arbitrary $k \in \{1, \dots, N_\tau\}$, taking the sum of the above inequality from $n = 1$ to $n = k$ yields
\begin{align*}
& (1-C \tau) G^k +  \frac{\min(1,2\eps)}{4}\sum_{n=1}^{k} \| \vp^{n} - \vp^{n-1} \|_{H^1}^2 + \frac{(1-\tau)}{4} \sum_{n=1}^{k}  |r^{n} - q^{n-1}|^2 \\
& \qquad + \frac{1}{2}\sum_{n=1}^{k}  |q^{n} - q^{n-1}|^2+ \frac{\tau(1-\eta) m_0}{2} \sum_{n=1}^{k} \| \nabla \mu^{n} \|^2 \\
& \quad  \leq G^0 + C \sum_{n=1}^{k} \| g^{n-1} \|^2 + C \tau \sum_{n=1}^{k} (1 + \| g^{n-1} \|^2 + G^{n-1} ).
\end{align*}
Let $\tau_*$ be a fixed constant so that the prefactor $1-C\tau_*$ is positive,  Then, for all $\tau \in (0,\tau_*)$, by the discrete Gronwall inequality it holds that for all $k = 1, \dots, N_\tau$,
\begin{equation}\label{stab:2}
\begin{aligned}
& \| \vp^k \|_{H^1}^2 + |q^{k}|^2 + \sum_{n=1}^{k} \| \vp^{n} - \vp^{n-1} \|_{H^1}^2 + \sum_{n=1}^{k}  |r^{n} - q^{n-1}|^2 \\
& \quad +\sum_{n=1}^{k}  |q^{n} - q^{n-1}|^2 + \tau \sum_{n=1}^{k} \| \nabla \mu^{n} \|^2 \leq C.
\end{aligned}
\end{equation}
In turn, we find that for all $k = 1, \dots, N_\tau$,
\begin{align}\label{rn:est}
|r^k|^2 \leq 2|r^k - q^{k-1}|^2 + 2 |q^{k-1}|^2 \leq C.
\end{align}
Returning to the mean value estimate \eqref{mean:mu:est} and by the Poincar\'e inequality, we find 
\begin{align}\label{stab:3}
\tau \sum_{n=1}^{k} |(\mu^{n},1)| \leq C \quad \text{ and } \quad \tau \sum_{n=1}^{k} \| \mu^{n} \|^2 \leq C.
\end{align}
Next, we test \eqref{dis:1} with an arbitrary $\zeta \in H^1(\Omega)$, leading to 
\[
\frac{1}{\tau}|(\vp^{n} - \vp^{n-1}, \zeta)| \leq m_1 \| \nabla \mu^{n} \| \| \nabla \zeta \| + \| f^n \| \| \zeta \| \leq C \Big ( 1 + \|\nabla \mu^{n} \| \Big ) \| \zeta \|_{H^1}.
\]
This shows that $\| \tfrac{1}{\tau}(\vp^{n} - \vp^{n-1}) \|_{(H^1)^*} \leq  C ( 1 + \|\nabla \mu^{n} \| )$ and hence
\[
\sum_{n=1}^{k}\frac{1}{\tau} \| \vp^{n} - \vp^{n-1} \|_{(H^1)^*}^2 \leq C.
\]
To show \eqref{timediff}, we test \eqref{dis:1} with $\vp^{m+h} - \vp^{m}$ where $m = 0, \dots, N_\tau - h$ and $h = 1, \dots, N_\tau$ to obtain
\[
0 = (\vp^n - \vp^{n-1} - \tau f^{n-1}, \vp^{m+h} - \vp^m) + \tau (m(\vp^{n-1}) \nabla \mu^n, \nabla (\vp^{m+h} - \vp^m)).
\]
Summing from $n = m+1,\dots, m+h$ yields
\begin{align*}
\| \vp^{m+h} - \vp^m \|^2 & \leq C\tau  \sum_{n=m+1}^{m+h} \Big ( \| f^{n-1} \| + \| \nabla \mu^n \| \Big ) \| \vp^{m+h} - \vp^m \|_{H^1} \\
& \leq C \tau \sum_{k=1}^h \Big ( 1 + \| \nabla \mu^{m+k} \| \Big ) \| \vp^{m+h} - \vp^m \|_{H^1}.
\end{align*}
Multiplying both sides by $\tau$, summing from $m = 0, \dots, N_\tau - h$ and applying H\"older's inequality leads to 
\begin{align*}
& \tau \sum_{m=0}^{N_\tau - h} \| \vp^{m+h} - \vp^m \|^2 \leq C \tau^2 \sum_{k=1}^h \sum_{m=0}^{N_\tau - h}  \Big ( 1 + \| \nabla \mu^{m+k} \| \Big ) \| \vp^{m+h} - \vp^m \|_{H^1} \\
&\quad \leq C \tau \sum_{k=1}^h \Big ( \tau \sum_{m=0}^{N_\tau - h} \| \vp^{m+h} - \vp^m \|_{H^1}^2 \Big )^{1/2} \Big (1 + \Big ( \tau \sum_{m=0}^{N_\tau - h} \| \nabla \mu^{m+k} \|^2 \Big )^{1/2} \Big ) \\
& \quad \leq \tc{C \tau \sum_{k=1}^h \Big (N_\tau \tau \max_{n = 1, \dots, N_\tau} \| \vp^n \|_{H^1}^2 \Big )^{1/2} \Big ( 1 + \Big ( \tau \sum_{n=0}^{N_\tau} \| \nabla \mu^n \|^2 \Big )^{1/2} \Big )} \leq C h \tau.
\end{align*}
When $h = 1$, a higher exponent can be inferred by using \eqref{stab:2}:
\[
\tau \sum_{m=0}^{N_\tau -1} \| \vp^{m+1} - \vp^m \|^2 \leq C \tau \Big ( \tau \sum_{m=0}^{N_\tau - 1} \| \vp^{m+1} - \vp^m \|_{H^1}^2 \Big )^{1/2} \leq C \tau^{3/2}.
\]
Furthermore, we test \eqref{dis:2} with $- \Lap \vp^{n}$, which yields
\begin{equation}\label{H2:est}
\begin{aligned}
\frac{\eps}{2} \| \Lap \vp^{n} \|^2 & \leq C \| g^{n-1} \|^2 + C \frac{|r^{n}|^2}{Q(\vp^{n-1})^2} \| F'(\vp^{n-1}) \|^2 \\
& \quad + C \| \nabla \mu^{n} \| \| \nabla \vp^{n} \| \leq C + C\| \nabla \mu^{n} \|,
\end{aligned}
\end{equation}
after applying \eqref{stab:2}, \eqref{rn:est}, the lower bound $Q(\vp^{n-1}) \geq \sqrt{C_0}$, and \eqref{ass:Ff}. Invoking elliptic regularity (e.g.~\cite[Thm.~2.4.2.7]{Gris}), we find that 
\begin{align}\label{stab:4}
\tau \sum_{n=1}^{k} \| \vp^{n} \|_{H^2}^4 \leq C \tau \sum_{n=1}^{k} \big ( \| \Lap \vp^{n} \|^4 + \| \vp^{n} \|^4 \big ) \leq C.
\end{align}
This completes the proof.
\end{proof}

\begin{remark}[Sharpness of \eqref{ass:Ff}]\label{rem:sharp}
If $F$ satisfies the growth assumptions 
\[
c |s|^p - C \leq F(s) \leq C(1 + |s|^p), \quad  |F'(s)| \leq C(1+|s|^{p-1}),
\]
for some $p \in [2,\infty)$ if $d = 2$, or $p \in [2,6)$ if $d = 3$, via a similar calculation as \eqref{mean:mu:1}, we find that 
\begin{align*}
\frac{|r^{n}|}{\eps Q(\vp^{n-1})} |(F'(\vp^{n-1}),1)| & \leq C \frac{|r^{n}|}{Q(\vp^{n-1})} (F(\vp^{n-1}),1)^{\frac{p-1}{p}} \\
& \quad \leq \frac{1}{8} |r^{n}|^2 + C (F(\vp^{n-1}),1)^{\frac{p-2}{p}} \leq \frac{1}{8} |r^{n}|^2 + C \| \vp^{n-1} \|_{L^p}^{p-2},
\end{align*}
where we have omitted lower order terms. Let $\alpha = \frac{p-2}{p}$ if $d = 2$, or $\alpha = \frac{3(p-2)}{2p}$ if $d = 3$, so that $\alpha \in (0,1)$, and by invoking the Gagliardo--Nirenberg inequality,
\begin{align*}
\| \vp^{n-1} \|_{L^p}^{p-2} & \leq C \| \vp^{n-1} \|_{L^2}^{(1-\alpha)(p-2)} \| \vp^{n-1} \|_{H^1}^{\alpha(p-2)}  \leq C \| \vp^{n-1} \|_{L^2}^{\frac{s(1-\alpha)(p-2)}{s-1}} + C \| \vp^{n-1} \|_{H^1}^{\alpha(p-2)s},
\end{align*}
for some $s > 1$. Hence, to control $\| \vp^{n-1} \|_{L^p}^{p-2}$ with $\| \vp^{n-1} \|_{H^1}^2$, we demand 
\[
\alpha(p-2) s = \frac{s(1-\alpha)(p-2)}{s-1} = 2,
\]
and a short calculation shows that $p = 4$. Hence, to use the idea of \eqref{mean:mu:1} to proceed, the quartic growth assumption \eqref{ass:Ff} on $F$ seems to be sharp.
\end{remark}

\subsection{Discussion on the polynomial growth of $F$}\label{sec:discF}
The boundedness assumption \eqref{ass:Ff} on $f$ is essential for the above analysis to go through if one chooses $F$ to be a polynomial potential with super-quadratic growth.  In particular, if $f$ is assumed to belong to $L^2(0,T;L^2(\Omega))$, or treating $f = \hat{f}(\vp)$ where $|\hat{f}(s)| \leq C(1 + |s|)$ has linear growth, then the analysis restricts $F$ to have at most quadratic growth. Indeed, we revisit \eqref{mu:trick} where upon employing the Poincar\'e inequality:
\[
|(f^{n-1}, \mu^{n})| \leq C \| f^{n-1} \| \| \mu^{n} \| \leq C \| f^{n-1} \| \big (\| \nabla \mu^{n} \| + |(\mu^{n},1)| \big ).
\]
Control of this term requires an a priori estimate on the square of the mean value in terms of $|r^{n}|^2$ and $\| \vp^{n} \|_{H^1}^2$. From \eqref{mean:mu} we see
\[
|(\mu^{n},1)|^2 \leq C\frac{|r^{n}|^2}{Q(\vp^{n-1})^2} |(F'(\vp^{n-1}),1)|^2 + C \| g^{n-1} \|^2.
\]
If $|F'(s)| \sim |s|^p$ for some exponent $p \geq 1$, then applying a similar argument (omitting the lower order terms), we find that
\begin{align*}
\frac{|r^{n}|^2}{Q(\vp^{n-1})^2} |(F'(\vp^{n-1}),1)|^2 \leq C \frac{|r^{n}|^2}{Q(\vp^{n-1})^2} |(F(\vp^{n-1}),1)^{\frac{2p}{p+1}} \leq C|r^{n}|^2 Q(\vp^{n-1})^{2 \frac{p-1}{p+1}}.
\end{align*}
In order for this term to be controlled by the left-hand side of \eqref{stab:1}, we are forced to take $p = 1$, which limits $F$ to at most quadratic growth at infinity.

\begin{remark}
We mention that solutions to models with quadratic potentials (e.g. as limits of a SLI-based numerical scheme) may exhibit behavior different from those corresponding to models with super-quadratic potentials.  For instance, as noted in \cite[Remark 2.5]{Miran:longtime} for a Cahn--Hilliard tumor model (see \eqref{Tumor} below), establishing the long-time behavior of weak solutions seem to demand $F$ to have a polynomial growth at infinity faster than cubic.
\end{remark}

\section{Convergence of the time discrete solutions}\label{sec:conv}
In this section we show a weak solution to \eqref{CH} can be obtained in the limit $\tau \to 0$.  As the spatial discretization can be done in a standard way, we omit the details and refer to recent works \cite{GaTr,Metzger} for the convergence analysis of fully discrete solutions. For fixed $\tau$, we introduce the affine linear and piecewise constant extensions of time discrete functions $a^n$, $n =1, \dots, N_\tau$:
\begin{equation}\label{extension:def}
\begin{aligned}
a^\tau(t) := \frac{t-t^{n-1}}{\tau} a^{n} + \frac{t^{n} - t}{\tau} a^{n-1} & \text{ for } t \in [t^{n-1}, t^{n}], \\
a^{\tau,+}(t) := a^{n}, \quad a^{\tau,-}(t) := a^{n-1} & \text{ for } t \in \tc{(}t^{n-1}, t^{n}],
\end{aligned}
\end{equation}
for $a \in \{\vp, \mu, r, q\}$, \tc{where we set $\vp^\tau(0) = \vp_0$ and $q^\tau(0) = Q(\vp_0)$}.  Upon noting that 
\begin{equation}\label{inter:id}
\begin{aligned}
a^{\tau} (t)- a^{\tau,-}(t) = \frac{t-t^{n-1}}{\tau}(a^{n} - a^{n-1}) & \text{ for } t \in \tc{(}t^{n-1}, t^{n}], \\
a^{\tau,+} (t)- a^{\tau}(t) = \frac{t^{n}-t}{\tau}(a^{n} - a^{n-1}) & \text{ for } t \in \tc{(}t^{n-1}, t^{n}],
\end{aligned}
\end{equation}
we deduce from Theorem~\ref{thm:stab} the following uniform estimates (where the notation $a^{\tau,(\pm)}$ is a shorthand for $\{a^\tau, a^{\tau,+}, a^{\tau,-}\}$, while $a^{\tau, \pm}$ is a shorthand for $\{a^{\tau,+},a^{\tau,-}\}$):
\begin{equation}\label{unif:est}
\begin{aligned}
& \|\vp^{\tau, (\pm)} \|_{L^\infty(0,T;H^1)}^2 + \| \vp^{\tau, (\pm)} \|_{L^4(0,T;H^2)}^4+ \|q^{\tau,(\pm)} \|_{L^\infty(0,T)}^2 + \| r^{\tau, +} \|_{L^\infty(0,T)}^2 \\
& \qquad + \frac{1}{\tau} \| r^{\tau,+} - q^{\tau,-} \|_{L^2(0,T)}^2  + \|\mu^{\tau,+}\|_{L^2(0,T;H^1)}^2 + \|\pd_t \vp^{\tau} \|_{L^2(0,T;(H^1)^*)}^2 \\
& \qquad + \frac{1}{\tau} \| q^{\tau} - q^{\tau,\pm} \|_{L^2(0,T)}^2   + \frac{1}{\tau} \| \vp^{\tau} - \vp^{\tau,\pm} \|_{L^2(0,T;H^1)}^2  \leq C,
\end{aligned}
\end{equation}
with the last two terms coming from taking the $L^2$-norm of \eqref{inter:id} for $a \in \{\vp, q\}$.  Furthermore, from \eqref{timediff}, we infer that
\begin{align}
\tc{\tau^2 \| \pd_t \vp^\tau \|_{L^2(0,T;L^2)}^2 = \| \vp^{\tau,+} -\vp^{\tau,-} \|_{L^2(0,T;L^2)}^2} & \tc{\leq C \tau^{\frac{3}{2}},} \label{vp:pm:tau} \\
\| \vp^{\tau, (\pm)}(\cdot + h\tau) - \vp^{\tau, (\pm)} \|_{L^2(0,T-h\tau;L^2)}^2 & \leq Ch \tau \quad \text{ for any } h \in \{1, \dots, N_\tau\}.  \label{Simon}
\end{align}
Then, for arbitrary $\phi \in L^2(0,T;H^1(\Omega))$ testing \eqref{dis:1} and \eqref{dis:2} with $\phi$, summing from $n = 1$ to $N_\tau $, and using the above definitions, we arrive at
\begin{subequations}\label{disc:weak}
\begin{alignat}{2}
0 & = \int_0^T (\pd_t \vp^\tau - f^{\tau}, \phi) + (m(\vp^{\tau,-}) \nabla \mu^{\tau,+}, \nabla \phi) \dt , \label{dis:weak:1} \\
0 & = \int_0^T \eps (\nabla \vp^{\tau,+}, \nabla \phi) + \frac{r^{\tau,+}}{\eps Q(\vp^{\tau,-})} (F'(\vp^{\tau,-}), \phi) - (\mu^{\tau, +} + g^{\tau}, \phi) \dt. \label{dis:weak:2}
\end{alignat}
\end{subequations}
The main \tc{difference} compared to the convergence analysis of the standard SAV scheme \cite{SAVconv} is that the left-hand side of \eqref{dis:3} does not translate well into a time derivative of $r^\tau$ or $q^\tau$. \tc{However, as we note below, for strict RSAV schemes (i.e., $\zeta_n^\tau <1$ in \eqref{dis:4}) it suffices to show that the limits of $r^{\tau,+}$ and $Q(\varphi^{\tau,-})$ coincide, and equation \eqref{dis:3} does not play a role.}

\tc{
In the following we explore the convergence analysis for two choices of $\zeta_n^\tau$. As the choices are uniform in $n$ we use the notation $\zeta^\tau$. The first considers $\zeta^\tau = \zeta$ with a fixed $\zeta \in [0,1]$, covering both the standard SAV scheme $q^n = r^n$ with $\zeta = 1$, and the idealized update approach $q^n = Q(\vp^n)$ with $\zeta = 0$.  Note that aside from $\zeta = 1$, it is unknown whether for other values $\zeta \in [0,1)$ the criterion \eqref{dis:5} is fulfilled for all $n = 1, \dots, N_\tau$. This motivates the second choice where we consider $\zeta^\tau = 1 - e_\tau \tau$, with a non-negative sequence $\{e_\tau\}_{\tau > 0}$. This results in a method that asymptotically close to the standard SAV scheme with the relaxation effect almost completely negated for small time steps. Recalling the discussion before Remark~\ref{rmk:ideal}, we can pick $e_\tau = \max_n (1-w_n^\tau)$, so that $\zeta^\tau = 1 - e_\tau \tau \in \V_n^\tau$ for all $n = 1, \dots, N_\tau$.}


\begin{theorem}[Convergence]\label{thm:conv}
\tc{For any $\tau \in (0,\tau_*)$,} let $\{(\vp^k, \mu^k, r^k, q^k)\}_{k=1}^{N_\tau}$ denote the time discrete solutions to the relaxed SAV scheme \eqref{CH:dis}-\eqref{dis:4}, \tc{where we assume that either a fixed $\zeta \in [0,1]$ belongs to $\V_n^\tau$ for all $n = 1, \dots, N_\tau$, or there exists a non-negative convergent sequence $\{e_\tau\}_{\tau > 0}$ with $\lim_{\tau \to 0} e_\tau = e_* \in [0,\infty)$ such that $1-e_\tau \tau \in \V_n^\tau$ for all $n = 1,\dots, N_\tau$}. Then, there exists a non-relabelled subsequence $\tau \to 0$ and limit functions
\begin{align*}
\vp & \in L^\infty(0,T;H^1(\Omega)) \cap H^1(0,T;(H^1(\Omega))^*) \cap L^4(0,T;H^2(\Omega)), \\
 \mu & \in L^2(0,T;H^1(\Omega)), \quad q \in L^\infty(0,T),
\end{align*}
such that the interpolation functions $\{\vp^{\tau, (\pm)}, \mu^{\tau,+}, r^{\tau,+}, q^{\tau, (\pm)}\}$ satisfy the following compactness assertions for any $s < \infty$ in $d = 2$ and $s < 6$ in $d = 3$:
\begin{align*}
\vp^{\tau, (\pm)} \to \vp & \text{ weakly* in } L^\infty(0,T;H^1(\Omega)) \cap L^4(0,T;H^2(\Omega)), \\
\pd_t \vp^\tau \to \pd_t \vp & \text{ weakly in } L^2(0,T;(H^1(\Omega))^*), \\
\vp^{\tau,(\pm)} \to \vp & \text{ strongly in } L^2(0,T;L^s(\Omega)) \text{ and a.e.~in } \Omega \times (0,T),\\
\mu^{\tau,+} \to \mu & \text{ weakly in } L^2(0,T;H^1(\Omega)), \\
q^{\tau, (\pm)} \text{ and } r^{\tau, +} \to q & \text{ weakly* in } L^\infty(0,T), \\
\tc{Q(\vp^{\tau,(\pm)}) \to Q(\vp)} & \tc{\text{ strongly in } L^2(0,T)}.
\end{align*}
The limit pair $(\vp, \mu)$ is a weak solution to \eqref{CH} in the sense
\begin{subequations}\label{CH:weak}
\begin{alignat}{2}
\inn{\pd_t \vp}{\phi} + (m(\vp) \nabla \mu, \nabla \phi) - (f, \phi) & = 0, \label{weak:1}\\
\eps(\nabla \vp, \nabla \phi) + \eps^{-1} (F'(\vp), \phi) - (\mu + g, \phi) & = 0, \label{weak:2}
\end{alignat}
\end{subequations}
for all $\phi \in H^1(\Omega)$ and for a.e.~$t \in (0,T)$, while $q(t) = Q(\vp(t))$ holds for a.e.~$t \in (0,T)$.
\end{theorem}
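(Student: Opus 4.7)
The plan is to extract weakly convergent subsequences from the stability estimates \eqref{unif:est}, upgrade to strong compactness via Aubin--Lions, pass to the limit in the discrete weak formulation \eqref{disc:weak}, and finally identify $q$ with $Q(\vp)$ using the update rule \eqref{dis:4}. From \eqref{unif:est}, along a non-relabeled subsequence, $\vp^{\tau,(\pm)} \to \vp$ weakly* in $L^\infty(H^1) \cap L^4(H^2)$, $\pd_t \vp^\tau \to \pd_t \vp$ weakly in $L^2((H^1)^*)$, $\mu^{\tau,+} \to \mu$ weakly in $L^2(H^1)$, and $r^{\tau,+}, q^{\tau,(\pm)}$ weakly* in $L^\infty(0,T)$. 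The Aubin--Lions--Simon lemma applied to $\vp^\tau \in L^4(H^2)$ with $\pd_t \vp^\tau \in L^2((H^1)^*)$, using the Rellich embedding $H^2 \hookrightarrow\hookrightarrow H^1$, yields strong $L^4(H^1)$ convergence of $\vp^\tau$, and by Sobolev strong $L^2(L^s)$ convergence with a.e.~convergence on a further subsequence. The $O(\tau^{1/2})$ bounds from \eqref{vp:pm:tau} and \eqref{unif:est} force $\vp^{\tau,\pm}$ to share the strong limit $\vp$, and force $r^{\tau,+}$ and $q^{\tau,(\pm)}$ to share a single weak* limit $q$. Continuity of $Q$ on bounded subsets of $L^4(\Omega)$ then delivers $Q(\vp^{\tau,(\pm)}) \to Q(\vp)$ strongly in $L^2(0,T)$.

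Passing to the limit in \eqref{dis:weak:1} is standard weak-strong pairing: $m(\vp^{\tau,-}) \to m(\vp)$ pointwise a.e.\ and boundedly by \eqref{ass:const:m}, combined with the weak $L^2(L^2)$ convergence of $\nabla \mu^{\tau,+}$. In \eqref{dis:weak:2}, the delicate factor is $r^{\tau,+}(F'(\vp^{\tau,-}),\phi)/(\eps Q(\vp^{\tau,-}))$: dominated convergence using $|F'(s)| \leq C(1+|s|^3)$ and $\vp \in L^\infty(L^6)$ gives strong $L^r(0,T)$ convergence (for some $r > 1$) of the scalar sequence $(F'(\vp^{\tau,-}),\phi)/Q(\vp^{\tau,-})$, which pairs with weak* $L^\infty(0,T)$ convergence of $r^{\tau,+}$ to produce the limit $q(F'(\vp),\phi)/(\eps Q(\vp))$. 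This yields
\begin{equation*}
\eps (\nabla \vp, \nabla \phi) + \frac{q}{\eps Q(\vp)}(F'(\vp), \phi) - (\mu + g, \phi) = 0,
\end{equation*}
which collapses to \eqref{weak:2} as soon as $q = Q(\vp)$ is established.

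The identification $q = Q(\vp)$ splits into cases. When $\zeta \in [0,1)$ is fixed, the relation $q^{\tau,+} = \zeta r^{\tau,+} + (1-\zeta)Q(\vp^{\tau,+})$ passes directly to the limit, and together with $r = q$ gives $(1-\zeta)(q - Q(\vp)) = 0$, hence $q = Q(\vp)$. The standard SAV case $\zeta = 1$ and the perturbative case $\zeta^\tau = 1 - e_\tau \tau$ are handled together by cumulatively summing \eqref{dis:3} with the cascade $q^{k-1} - r^{k-1} = (1-\zeta^\tau)(Q(\vp^{k-1}) - r^{k-1})$ to obtain, for $t \in (t^{n-1}, t^n]$,
\begin{equation*}
r^{\tau,+}(t) = Q(\vp_0) + \int_0^t \frac{(F'(\vp^{\tau,-}), \pd_s \vp^\tau)}{2\eps Q(\vp^{\tau,-})} \, ds + e_\tau \int_0^{t^{n-1}} (Q(\vp^{\tau,+}) - r^{\tau,+}) \, ds.
\end{equation*}
Rewriting the middle integrand as the duality pairing $\inn{\pd_s \vp^\tau}{F'(\vp^{\tau,-})/(2\eps Q(\vp^{\tau,-}))}_{(H^1)^*, H^1}$ and combining weak $L^2((H^1)^*)$ convergence of $\pd_s \vp^\tau$ with strong $L^2(H^1)$ convergence of the test function yields, via the chain rule $\frac{d}{dt}Q(\vp)^2 = \inn{\pd_t \vp}{F'(\vp)}/\eps$ (valid since $F'(\vp) \in L^2(H^1)$ under \eqref{ass:Ff}), the limit $Q(\vp(t)) - Q(\vp_0)$. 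The last term tends to $e_* \int_0^t (Q(\vp) - r) \, ds$, and the resulting integral equation $r(t) - Q(\vp(t)) = -e_* \int_0^t (r - Q(\vp)) \, ds$ has the unique $L^\infty$ solution $r = Q(\vp)$ by Gronwall, whence $q = r = Q(\vp)$.

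The principal obstacle is the strong $L^2(0,T;H^1)$ convergence of $F'(\vp^{\tau,-})/Q(\vp^{\tau,-})$ required in the duality pairing. This rests on strong $L^2(H^1)$ convergence of $\vp^{\tau,-}$ from Aubin--Lions, the $L^4(L^\infty)$ control on $\vp$ via $H^2 \hookrightarrow L^\infty$ in $d = 3$ together with $\vp \in L^4(H^2)$, and the polynomial growth $|F''(s)| \leq C(1+|s|^2)$ inherited from \eqref{ass:Ff}; this ties the convergence analysis to the same quartic-growth restriction on $F$ highlighted in Remark~\ref{rem:sharp}.
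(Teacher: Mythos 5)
Your proposal follows the paper's overall architecture (compactness from \eqref{unif:est}, weak--strong limit passage in \eqref{disc:weak}, case split on $\zeta$ for the identification $q=Q(\vp)$), but it diverges genuinely at the two technical pinch points. For compactness you invoke Aubin--Lions with $L^4(0,T;H^2)\cap H^1(0,T;(H^1)^*)$ and $H^2\hookrightarrow\hookrightarrow H^1$, which actually yields the stronger $L^4(0,T;H^1)$ convergence; the paper instead applies Simon's translation criterion with the estimate \eqref{Simon} and only extracts $L^2(0,T;L^s)$ convergence, which is all it needs. More substantially, for the cases $\zeta^\tau=1$ and $\zeta^\tau=1-e_\tau\tau$ you telescope \eqref{dis:3} into a pointwise-in-time integral representation of $r^{\tau,+}$, pass to the limit in $\int_0^t\inn{\pd_s\vp^\tau}{F'(\vp^{\tau,-})/(2\eps Q(\vp^{\tau,-}))}\,ds$ by weak--strong duality pairing, apply a continuous chain rule to identify the limit with $Q(\vp(t))-Q(\vp_0)$, and close with Gr\"onwall on the integral equation. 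The paper instead tests the divided difference of \eqref{dis:3} against $\kappa\in H^1(0,T)$ with $\kappa(T)=0$, uses the \emph{discrete} chain-rule decomposition \eqref{F':Q:kappa} (following Metzger) whose error terms are $O(\tau^{1/4})$ by \eqref{vp:pm:tau} and \eqref{str:conv:tau}, and then kills the limiting distributional identity by choosing $\kappa$ to solve $\kappa'-e_*\kappa = Q(\vp)-q$. The two routes buy different things: yours gives a cleaner pointwise integral equation and stronger compactness as a by-product; the paper's stays entirely at the level of $Q(\vp^\tau)$ and never needs spatial derivatives of $F'(\vp^{\tau,-})$.

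The one point you should repair is the claim that $|F''(s)|\le C(1+|s|^2)$ is ``inherited from \eqref{ass:Ff}'': assumption \eqref{ass:Ff} only requires $F\in C^1(\R)$ with growth bounds on $F'$, so nothing about $F''$ is inherited, and your duality-pairing step genuinely needs $F\in C^{1,1}_{\mathrm{loc}}$ with that quadratic growth in order for $F'(\vp^{\tau,-})$ to lie in $H^1(\Omega)$ and converge strongly in $L^2(0,T;H^1(\Omega))$ (the latter itself requires an interpolation argument between the $L^4(0,T;H^1)$ bound on $\nabla F'(\vp^{\tau,-})$ and its $L^1$-type convergence, which you do not spell out). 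You also need a chain-rule lemma $\frac{d}{dt}\int_\Omega F(\vp)\dx=\inn{\pd_t\vp}{F'(\vp)}$ for the limit $\vp\in H^1(0,T;(H^1)^*)\cap L^4(0,T;H^2)$, which is standard but not free. The paper's discrete chain rule sidesteps both requirements, using only the local Lipschitz estimate on $F'$ in \eqref{F:diff:conv} and the already-established strong $L^2(0,T)$ convergence of $Q(\vp^\tau)$. For the quartic potential all of this is harmless, but as written your argument proves the theorem under a strictly stronger hypothesis on $F$ than the paper's.
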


\begin{proof}
The weak/weak* convergences of $\vp^{\tau,(\pm)}$, $\pd_t \vp^\tau$, $\mu^{\tau,+}$, $q^{\tau,(\pm)}$ and $r^{\tau,+}$ follow from standard compactness results in Bochner spaces. Then, from \eqref{unif:est} we infer
\begin{align}\label{str:conv:tau}
\| r^{\tau,+} - q^{\tau,-} \|_{L^2(0,T)}^2 + \| q^\tau - q^{\tau, \pm} \|_{L^2(0,T)}^2 + \| \vp^\tau - \vp^{\tau, \pm} \|_{L^2(0,T;H^1)}^2 \leq C \tau,
\end{align}
which yields the uniqueness of weak limits for $\{\vp^{\tau,(\pm)}\}$ and for $\{q^{\tau, (\pm)}, r^{\tau,+} \}$. The strong convergence \tc{of $\vp^{\tau,(\pm)}$} in $L^2(0,T;L^s(\Omega))$ comes from the application of \cite[\S 8, Thm.~5]{Simon} with the uniform boundedness of $\vp^{\tau,(\pm)}$ in $L^\infty(0,T;H^1(\Omega))$, the compact embedding $H^1(\Omega) \subset \subset L^s(\Omega)$ and the uniform time translation estimate \eqref{Simon}. \tc{
Let $F^{(0)}(s) = F(s)$ and $F^{(1)}(s) = F'(s)$. Then, a short calculation with the estimate
\[
|F^{(i)}(s_1) - F^{(i)}(s_2)| \leq C \big ( 1 + |s_1|^{3-i} + |s_2|^{3-i} \big )|s_1 - s_2| \text{ for } i = 0,1,
\]
together with the boundedness of $\vp^{\tau,(\pm)}, \vp$ in $L^4(0,T;H^2(\Omega)) \cap L^\infty(0,T;H^1(\Omega))$  yields $F'(\vp^{\tau,(\pm)}) \in L^\infty(0,T;L^2(\Omega))$ and
\begin{equation}\label{F:diff:conv}
\begin{alignedat}{2}
\| F'(\vp^{\tau,(\pm)}) - F'(\vp) \|_{L^{\frac{6}{5}}} & \leq C \big ( 1 + \| \vp^{\tau,(\pm)} \|_{L^6}^2 + \| \vp \|_{L^6}^2 \big ) \| \vp^{\tau,(\pm)} - \vp \| \leq C\| \vp^{\tau,(\pm)} - \vp \|, \\
 \| F(\vp^{\tau,(\pm)}) - F(\vp)\|_{L^1} & \leq C \big ( 1 + \| \vp^{\tau,(\pm)} \|_{H^1}^3 + \| \vp \|_{H^1}^3 \big ) \| \vp^{\tau,(\pm)} - \vp \| \leq C\| \vp^{\tau,(\pm)} - \vp \|.
\end{alignedat}
\end{equation}
Together with the following estimate
\[
|Q(\vp^{\tau,(\pm)}) - Q(\vp)| \leq \frac{1}{\eps} \frac{\| F(\vp^{\tau,(\pm)}) - F(\vp) \|_{L^1}}{ |Q(\vp^{\tau,(\pm)}) + Q(\vp)|} \leq C \| \vp^{\tau,(\pm)} - \vp \|,
\]
we deduce the strong convergence $Q(\vp^{\tau,(\pm)}) \to Q(\vp)$ in $L^2(0,T)$.
}

Then, using the boundeness of $m$ and the a.e.~convergence of $\vp^{\tau,-}$ we can infer that $m(\vp^{\tau,-}) \nabla \zeta \to m(\vp) \nabla \zeta$ strongly in $L^2(0,T;L^2(\Omega))$.  Thus, passing to the limit $\tau \to 0$ in \eqref{dis:weak:1} leads to \eqref{weak:1}, and by the continuous embedding $L^2(0,T;H^1(\Omega)) \cap H^1(0,T;(H^1(\Omega))^*) \subset C^0([0,T];L^2(\Omega))$, we have the attainment of the initial condition $\vp(0) = \vp_0$. In the case where $f = \hat{f}(\vp)$ is a bounded continuous function of $\vp$, we replace $f^{\tau}$ in \eqref{dis:weak:1} with $f(\vp^{\tau,-})$. Then, invoking the a.e.~convergence of $\vp^{\tau,-}$ in $\Omega \times (0,T)$ and the dominated convergence theorem yield that $\int_0^T (f(\vp^{\tau,-}), \phi) \dt \to \int_0^T (f(\vp), \phi) \dt$.

Passing to the limit $\tau \to 0$ in the terms of \eqref{dis:weak:2} other than the potential term is straightforward, and so we omit the details.  
For arbitrary $\phi \in L^2(0,T;H^1(\Omega))$ we have $\frac{(F'(\vp), \phi)}{Q(\vp)} \in L^1(0,T)$ and thus \tc{by \eqref{F:diff:conv}}
\begin{equation}\label{r:F:Q}
\begin{alignedat}{2}
& \left | \int_0^T \frac{r^{\tau,+} (F'(\vp^{\tau,-}), \phi)}{\eps Q(\vp^{\tau,-})} \mp \frac{r^{\tau,+}  (F'(\vp^{\tau,-}), \phi)}{\eps Q(\vp)} \mp \frac{r^{\tau,+} (F'(\vp), \phi) }{\eps Q(\vp)}  - \frac{q(F'(\vp), \phi)}{\eps Q(\vp)} \tc{\dt} \right | \\
& \quad  \leq \int_0^T \frac{|(F'(\vp^{\tau,-}), \phi)|}{\eps Q(\vp^{\tau,-}) Q(\vp)} \frac{|r^{\tau,+}|}{Q(\vp^{\tau,-}) + Q(\vp)} \| F(\vp^{\tau,-}) - F(\vp) \|_{L^1} \dt \\
& \qquad + \int_0^T \frac{|r^{\tau,+}|}{\eps Q(\vp)} \| \phi \|_{L^6} \| F'(\vp^{\tau,-}) - F'(\vp) \|_{L^{\frac{6}{5}}} \dt + \left | \int_0^T (r^{\tau,+} - q) \frac{(F'(\vp), \phi)}{\eps Q(\vp)}   \dt\right | \\
& \quad \leq C \| \vp^{\tau,-} - \vp \|_{L^2(0,T;L^2)} \| \phi \|_{L^2(0,T;H^1)} + \Big | \int_0^T (r^{\tau,+} - q) \frac{(F'(\vp), \phi)}{\eps Q(\vp)}  \dt \Big | \to 0,
\end{alignedat}
\end{equation}
as $\tau \to 0$, where we have applied the lower bound on $Q$. This implies that 
\begin{align}\label{weak:lim:F}
\lim_{\tau \to 0} \int_0^T \frac{r^{\tau,+}}{\eps Q(\vp^{\tau,-})} (F'(\vp^{\tau,-}), \phi) \dt = \int_0^T \frac{q}{\eps Q(\vp)} (F'(\vp), \phi) \dt.
\end{align}
It now remains to show $q = Q(\vp)$, so that we recover \eqref{weak:2} completely. \tc{We first consider the case $\zeta^\tau = \zeta \in [0,1)$, i.e., for strict RSAV schemes. With the strong convergence $Q(\vp^{\tau,+}) \to Q(\vp)$ in $L^2(0,T)$ and the weak$*$ convergence $r^{\tau,+} \to q$ in $L^\infty(0,T)$, we see that 
\[
q^{\tau,+} = \zeta r^{\tau,+} + (1-\zeta) Q(\vp^{\tau,+}) \to \zeta q + (1-\zeta) Q(\vp) \text{ weakly in } L^2(0,T).
\]
On the other hand, the weak$*$ convergence $q^{\tau,+} \to q$ in $L^\infty(0,T)$ yields the identity 
\[
q = \zeta q + (1-\zeta)Q(\vp),
\]
which implies $q = Q(\vp)$ as $1-\zeta > 0$. Then, we recover \eqref{weak:2} and the limit pair $(\vp,\mu)$ constitutes a weak solution to \eqref{CH}.
}

\tc{
We treat the remaining cases $\zeta^\tau = 1$ and $\zeta^\tau = 1 - e_\tau \tau$ together, where the former can be obtained from the latter by setting $e_\tau = e_* = 0$}. Adding and subtracting $q^n$ to \eqref{dis:3}, dividing by $\tau$ and testing with an arbitrary test function $\kappa \in H^1(0,T)$ such that $\kappa(T) = 0$. This yields after an integration by parts:
\begin{align}\label{zeta:1:lim}
\notag & \int_0^T (Q(\vp_0)-q^{\tau}) \kappa' \dt - \int_0^T \kappa \frac{ (F'(\vp^{\tau,-}), \pd_t \vp^{\tau})}{2 \eps Q(\vp^{\tau,-})}  \dt \\
& \quad = \int_0^T \kappa \frac{q^{\tau,+} - r^{\tau,+}}{\tau} \dt \tc{ \, = e_\tau \int_0^T \kappa  (Q(\vp^{\tau,+}) - r^{\tau,+}) \dt},
\end{align}
\tc{where we used that $q^\tau(0) = Q(\vp_0)$. Arguing as in \cite[Proof of Thm.~4.3]{Metzger}, we obtain 
\begin{equation}\label{F':Q:kappa}
\begin{alignedat}{2}
\int_0^T \kappa \frac{ (F'(\vp^{\tau,-}), \pd_t \vp^{\tau})}{2 \eps Q(\vp^{\tau,-})}   \dt & =  \int_0^T \frac{ \kappa (F'(\vp^{\tau,-}) ,\pd_t \vp^\tau )}{2\eps Q(\vp^{\tau,-}) Q(\vp^\tau)} \Big ( \frac{(F(\vp^{\tau}) - F(\vp^{\tau,-}), 1)}{Q(\vp^{\tau,-})+Q(\vp^\tau)} \Big ) \dt \\
&\quad + \int_0^T \kappa \frac{(F'(\vp^{\tau,-}) - F'(\vp^\tau), \pd_t \vp^\tau)}{2\eps Q(\vp^{\tau})} \dt \\
& \quad + \int_0^T \kappa \frac{d}{dt} Q(\vp^\tau) \dt.
\end{alignedat}
\end{equation}
Using the strong convergence of $Q(\vp^{\tau})$ in $L^2(0,T)$, the third term on the right-hand side of \eqref{F':Q:kappa} can be treated as follows after an integration by parts:
\[
\int_0^T \kappa \frac{d}{dt} Q(\vp^\tau) \dt = - \int_0^T \kappa' Q(\vp^\tau) \dt - \kappa(0) Q(\vp^\tau(0)) \to - \int_0^T \kappa' Q(\vp) \dt - \kappa(0) Q(\vp_0).
\]
With the help of \eqref{vp:pm:tau} and \eqref{str:conv:tau}, the second term on the right-hand side of \eqref{F':Q:kappa} can be estimated as
\begin{align*}
& \left | \int_0^T \kappa \frac{(F'(\vp^{\tau,-}) - F'(\vp^\tau), \pd_t \vp^\tau)}{2\eps Q(\vp^{\tau})} \dt \right | \leq C\|\kappa \|_{L^\infty(0,T)} \int_0^T \| F'(\vp^{\tau,-}) - F'(\vp^{\tau}) \| \| \pd_t \vp^\tau \| \dt \\
&  \quad \leq C \int_0^T \Big (1 + \| \vp^{\tau,-} \|_{L^6}^2 + \| \vp^\tau \|_{L^6}^2 \Big ) \| \vp^{\tau,-} - \vp^\tau \|_{L^6} \| \pd_t \vp^\tau \| \dt \\
& \quad \leq C \| \vp^{\tau,-} - \vp^{\tau} \|_{L^2(0,T;H^1)} \| \pd_t \vp^\tau \|_{L^2(0,T;L^2)} \leq C \tau^{\frac{1}{4}} \to 0 \quad \text{ as } \tau \to 0.
\end{align*}
Similarly, the first term on the right-hand side of \eqref{F':Q:kappa} can be estimated as
\begin{align*}
& \left |  \int_0^T \frac{ \kappa (F'(\vp^{\tau,-}) ,\pd_t \vp^\tau )}{2\eps Q(\vp^{\tau,-}) Q(\vp^\tau)} \Big ( \frac{(F(\vp^{\tau}) - F(\vp^{\tau,-}), 1)}{Q(\vp^{\tau,-})+Q(\vp^\tau)} \Big ) \dt  \right | \\
& \quad \leq C \| \kappa \|_{L^\infty(0,T)} \| F'(\vp^{\tau,-}) \|_{L^\infty(0,T;L^2)} \| \pd_t \vp^\tau \|_{L^2(0,T;L^2)} \| F(\vp^{\tau,-}) - F(\vp^\tau) \|_{L^2(0,T;L^1)}  \\
& \quad \leq C \| \pd_t \vp^\tau \|_{L^2(0,T;L^2)} \| \vp^{\tau,-} - \vp^\tau \|_{L^2(0,T;L^2)} \leq C \tau^{\frac{1}{4}} \to 0 \quad \text{ as } \tau \to 0.
\end{align*}
Then, passing to the limit in \eqref{zeta:1:lim} yields
\[
\int_0^T( \kappa' - e_* \kappa) (Q(\vp) - q) \dt =  0
\]
holding for arbitrary $\kappa \in H^1(0,T)$ with $\kappa(T) = 0$.  Choosing $\kappa$ as the solution to the ordinary differential equation $\kappa' - e_* \kappa = Q(\vp) - q$ with terminal condition $\kappa(T) = 0$ yields that 
\[
\| Q(\vp) - q \|_{L^2(0,T)} = 0,
\]
which provides the identification $q = Q(\vp)$. Hence, in the case $\zeta^\tau = 1 - e_\tau \tau$ we also recover \eqref{weak:2} with the limit pair $(\vp,\mu)$ constituting a weak solution to \eqref{CH}.  This completes the proof.
}
\end{proof}

\section{Numerical discussions}\label{sec:num}
\subsection{Fully discrete finite element approximation}\label{sec:FE}
For a bounded, convex domain $\Omega \subset \R^d$, $d \in \{1,2,3\}$, let $\{\Th\}_{h > 0}$ denote a regular family of conformal quasiuniform triangulations that partition $\Omega$ into disjoint open simplices $K$ such that $\max_{K \in \Th} \text{diam}(K) \leq h$. Let $\Sh$ denote the finite element space of continuous and piecewise linear functions:
\[
\Sh :=  \big \{ q_h \in C^0(\overline{\Omega}) \, : \, q_h \vert_K \in \mathcal{P}_1 \; \forall K \in \Th \big \} \subset H^1(\Omega), \quad \tc{U_h := \text{dim}(\Sh)},
\]
with the set of basis functions $\{ \chi_{h,k} \}_{k = 1}^{\tc{U_h}}$ that forms a dual basis to the set of vertices $\{\bm{x}_k\}_{k=1}^{\tc{U_h}}$ of $\Th$. The nodal interpolation operator $\Ih : C^0(\overline{\Omega}) \to \Sh$ is defined by $(\Ih \eta)(\bm{x}_k) = \eta(\bm{x}_k)$ for all $k = 1, \dots, \tc{U_h}$, and we introduce the semi-inner product on $C^0(\overline{\Omega})$:
\[
(\eta_1, \eta_2)^h := \int_\Omega \Ih(\eta_1 \eta_2) \dx = \sum_{k=1}^{\tc{U_h}} \eta_1(\bm{x}_k) \eta_2(\bm{x}_k) \int_\Omega \chi_{h,k}(\bm{x}) \dx.
\]
If $f = f(t,x)$ and $g = g(t,x)$ are given functions, they are approximated with $f^k_h(\cdot) = \Ih^c(f^k(\cdot))$ and $g^k_h(\cdot) = \Ih^c(g^k(\cdot))$ for $k = 1, \dots, N_\tau$, via the Cl\'ement operator $\Ih^c : L^2(\Omega) \to \Sh$ \cite{Clement}. If they are functions of $\vp$, we take $f_h^{k} = f(\vp_h^{k})$ and $g_h^{k} = g(\vp_h^{k})$. 

For more regular initial data $\vp_0 \in H^2(\Omega)$ we consider the discrete initial data $\vp^0_h := \Ih(\vp_0) \in \Sh$, otherwise we take $\vp^0_h = \Ih^c(\vp_0) \in \Sh$, with $q^0_h := Q_h(\vp^0_h) = (( \eps^{-1} F(\vp^0_h), 1)^h + C_0)^{1/2}$.  The fully discrete finite element scheme for \eqref{CH:alt} reads as follows: Given data $\vp^0_h  \in \Sh$, $q^0_h \in \R$, and $\{f^{k}_h, g^k_h\}_{k=1}^{N_\tau}$, for $n = 1, \dots, N_\tau$, find discrete solutions $\vp^n_h, \mu^n_h \in \Sh$ and $r^n_h \in \R$ that satisfy for all $j = 1, \dots, \tc{U_h}$:
\begin{subequations}\label{full:diss}
\begin{alignat}{2}
0& = (\vp^{n}_h - \vp^{n-1}_h, \chi_{h,j})^h + \tau(\Ih(m(\vp^{n-1}_h)) \nabla \mu^{n}_h , \nabla \chi_{h,j} ) - \tau (f^{n-1}_h, \chi_{h,j})^h , \label{full:diss:1} \\
0& =  \eps (\nabla \vp^{n}_h, \nabla \chi_{h,j}) - (\mu^n + g^{n-1}_h, \chi_{h,j})^h + \frac{r^n_h}{\eps Q_h(\vp^{n-1}_h)} (F'(\vp^{n-1}_h), \chi_{h,j})^h,\label{full:diss:2} \\
0& =  r^n_h - q^{n-1}_h - \frac{1}{2 \eps Q_h(\vp^{n-1}_h)} (F'(\vp^{n-1}_h), \vp^{n}_h - \vp^{n-1}_h)^h, \label{full:diss:3}
\end{alignat}
\end{subequations}
and update $q^n_h  = \zeta_n^\tau r_h^n + (1-\zeta_n^\tau) Q_h(\vp^{\tc{n}}_h)$ with a constant $\zeta_n^\tau \in [0,1]$ such that the analogue of \eqref{dis:5} is fulfilled. This fully discrete scheme is linear with respect to the unknowns $(\vp^{n}_h, \mu^{n}_h, r^{n}_h, q^n_h) \in \Sh \times \Sh \times \R \times \R$. Let us define the following matrices
\[
\bM_{ij} = (\chi_{h,i}, \chi_{h,j})^h, \quad \bS_{ij} = (\nabla \chi_{h,i}, \nabla \chi_{h,j}), \quad (\bS_h^{n-1})_{ij} = (\Ih(m(\vp_h^{n-1})) \nabla \chi_{h,i}, \nabla \chi_{h,j}),
\]
where we note that $\bM$ is diagonal \tc{with $\bM_{ii} = (\chi_{h,i}, 1)$}; vectors $\bm{\vp}^{n}_h  := \bM^{-1} [(\vp^n_h, \chi_{h,k})^h ]_{k=1}^{\tc{U_h}} = \tc{(\vp_h^n(\bm{x}_1), \vp_h^n(\bm{x}_2), \dots, \vp_h^n(\bm{x}_{U_h}))^{\top}}$, likewise for $\bm{\mu}^{n}_h$, $\bm{\vp}^{n-1}_h$; vectors $\bm{f}^{n-1}_h := \bM^{-1} [(f^{n-1}_h, \chi_{h,k})^h]_{k=1}^{\tc{U_h}} = \tc{(f_h^{n-1}(\bm{x}_1), f_h^{n-1}(\bm{x}_2), \dots, f_h^{n-1}(\bm{x}_{U_h}))^{\top}}$, likewise for $\bm{g}^{n-1}_h$; and vector
\[
\bm{b}^{n-1} := \frac{F'( \bm{\vp}^{n-1}_h)}{\eps Q_h(\vp^{n-1}_h)}, \text{ where } Q_h(\vp^{n-1}_h) = \Big ( \frac{1}{\eps} (F(\bm{\vp}^{n-1}_h) , 1)^h + C_0 \Big )^{1/2},
\]
with the nonlinearities applied component-wise. Then, \eqref{full:diss} can be expressed as 
\begin{align*}
\bm{0} & = \bM (\bm{\vp}^n_h - \bm{\vp}^{n-1}_h  - \tau \bm{f}^{n-1}_h) + \tau \bS^{n-1}_h (\eps \bM^{-1} \bS \bm{\vp}^n_h + r^n \bm{b}^{n-1} - \bm{g}^{n-1}_h), \\
0 & = r^n_h - q^{n-1}_h - \frac{1}{2} \bM \bm{b}^{n-1} \cdot (\bm{\vp}^{n}_h - \bm{\vp}^{n-1}_h).
\end{align*}
Defining the invertible matrix $\mathbb{B}_{n-1} :=  \mathbb{I} + \eps \tau \bM^{-1} \bS^{n-1}_h \bM^{-1} \bS$ with identity matrix $\mathbb{I}$, substituting the second equation into the first equation, applying $\mathbb{B}_{n-1}^{-1}$ and taking the product with $\bM\bm{b}^{n-1}$ would first yield an expression for $\tc{d^{n-1} = \,} \bM \bm{b}^{n-1} \cdot \bm{\vp}^n_h$ as
\begin{align*}
d^{n-1} & := \bM \bm{b}^{n-1} \cdot \bm{\vp}^n_h = \frac{\bM \bm{b}^{n-1} \cdot \mathbb{B}_{n-1}^{-1} \bm{c}^{n-1}}{1 + \frac{\tau}{2} \bM \bm{b}^{n-1} \cdot (\mathbb{B}_{n-1}^{-1} \bM^{-1} \bS^{n-1}_h \bm{b}^{n-1})}, \\
\bm{c}^{n-1} & := \bm{\vp}^{n-1}_h + \tau \bm{f}^{n-1}_h + \tau \bM^{-1} \bS^{n-1}_h (\bm{g}^{n-1}_h + [\tfrac{1}{2} \bM \bm{b}^{n-1} \cdot \bm{\vp}^{n-1}_h - q^{n-1}_h] \bm{b}^{n-1} ).
\end{align*}
Then, $\bm{\vp}^n_h$, $r^n_h$, $\bm{\mu}^n_h$ and $q^n_h$ can be computed via
\begin{equation*}
\begin{alignedat}{3}
\bm{\vp}^{n}_h & = \mathbb{B}_{n-1}^{-1}\bm{c}^{n-1} - \frac{\tau}{2} d^{n-1} \mathbb{B}_{n-1}^{-1} \bM^{-1} \bS^{n-1}_h \bm{b}^{n-1}, \quad && r^n_h = q^{n-1}_h + \frac{1}{2} \bM\bm{b}^{n-1} \cdot (\bm{\vp}^{n}_h - \bm{\vp}^{n-1}_h), \\
\bm{\mu}^n_h & = \eps \bM^{-1} \bS \bm{\vp}^n_h - \bm{g}^{n-1}_h + r^n_h \bm{b}^{n-1}, \quad && q^n_h = \zeta_n^\tau r^n_h + (1-\zeta_n^\tau) Q_h(\vp^n_h).
\end{alignedat}
\end{equation*}
\tc{As noted in \cite{SAV1}, the main computational cost in each step amounts to solving two linear systems involving $\mathbb{B}_{n-1}$. Once $\mathbb{B}_{n-1}^{-1} \mathbb{M}^{-1} \mathbb{S}_{h}^{n-1} \bm{b}^{n-1}$ and $\mathbb{B}_{n-1}^{-1} \bm{c}^{n-1}$ are computed, the constant $d^{n-1}$ and update $\bm{\vp}_h^{n}$ do not involve any matrix inverse operations.}

\subsection{Convergence test}
In this section we report on numerical \tc{tests} for the scheme \eqref{CH:dis} on an interval $\Omega = [0,1]$ discretized with equidistant nodal points and spatial step size $h = 0.01$. \tc{Let us first explore the optimal values of $\zeta_n^\tau$ as defined in \eqref{RSAV:opt} for various values of $\eta$ and $M$.  We choose} the source function $f$ so that the analytical solution to \eqref{CH} with $m(\vp) = 1$, $g = 0$, $F'(\vp) = \vp^3 - \vp$ and $\eps = 1$ is
\[
\vp^*(x,t) = \cos(\pi x) (1+t).
\]
We fix $C_0 = 1$, $\tau = 0.01$ and $N_\tau = 500$ (so that $T = N_\tau \tau = 5$). In our experiments we observe that the discrete energy $\frac{1}{2} \bS \bm{\vp}^n_h \cdot \bm{\vp}^n_h + Q(\bm{\vp}_h^n)^2 - C_0$ is increasing in time, \tc{see Figure~\ref{fig:num:GL}(a)}, and thus the equation does not exhibit a dissipative structure involving the Ginzburg--Landau functional.
\begin{figure}[htbp]
\centering
\subfloat[$\vp^*(x,t) = \cos(\pi x) (1+t)$]{
\includegraphics[width=5cm]{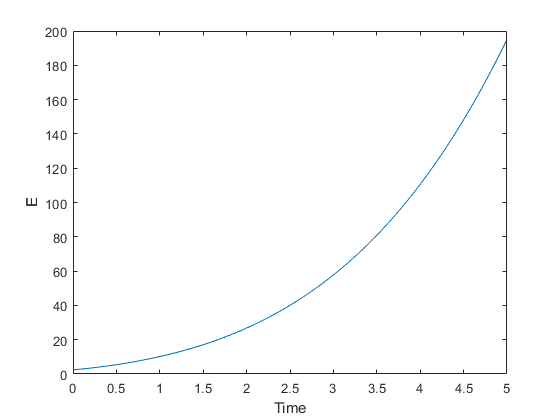}
}%
\subfloat[$\vp^*(x,t) =  \exp(\cos(\pi x)) \cos(t)$]{
\includegraphics[width=5cm]{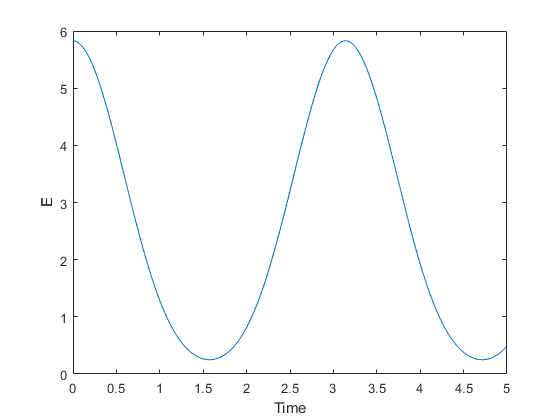}
}%
\subfloat[$\vp^*(x,t) = \exp(\cos(t))(\sin(\pi x))^2$]{
\includegraphics[width=5cm]{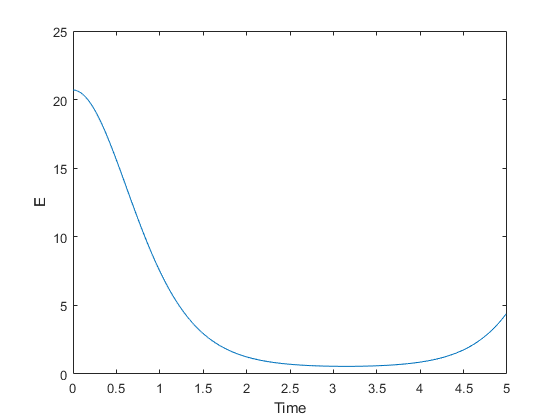}
}%
\caption{A plot of the discrete Ginzburg--Landau energy $E = \frac{1}{2} \bS \bm{\vp}^n_h \cdot \bm{\vp}^n_h + Q(\bm{\vp}_h^n)^2 - C_0$ for the numerical solutions approximating the three analytical test solutions.}
\label{fig:num:GL}
\end{figure}

For $M, \eta \sim O(10^{-3})$, the optimal $\zeta_n^\tau$ is $0$ for all $n = 1, \dots, N_\tau$. Moreover, we observed a switching behavior of the optimal $\zeta_n^\tau$ when either one of the parameter is zero, and the other parameter is small.  For example, with $\eta = 0$ the optimal $\zeta_n^\tau$ is $0$ for all $n = 1, \dots, N_\tau$ as long as $M \geq 0.05$. Decreasing $M$ yields a switching of the optimal value to $\zeta_n^\tau = 1$ for later iterations, and the smaller the value of $M$ the earlier the switching, see Figure~\ref{fig:etaM}(a) and (b). On the other hand, with $M = 0$ and $\eta = 10^{-5}$ the optimal $\zeta_n^\tau$ is zero except for the first iteration, similar to \cite[Fig.~2]{Zhang}, while with $\eta = 10^{-6}$ the optimal $\zeta_n^\tau$ hovers around 0.9 for $\sim 330$ iterations before switching to $0$, see Figure~\ref{fig:etaM}(c).

\begin{figure}[htbp]
\centering
\subfloat[$\eta = 0, M = 10^{-2}$]{
\includegraphics[width=5cm]{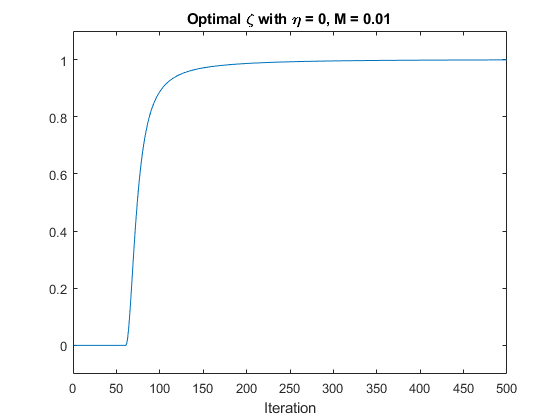}
}%
\subfloat[$\eta = 0, M = 10^{-3}$]{
\includegraphics[width=5cm]{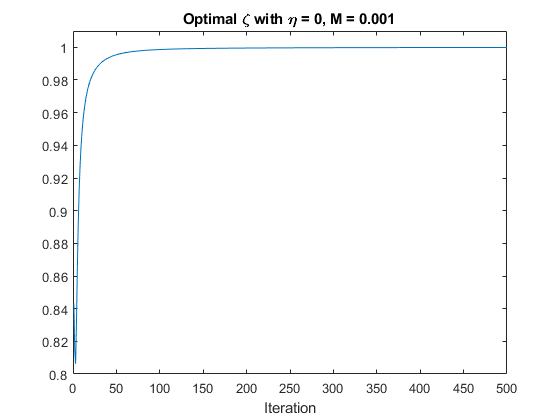}
}%
\subfloat[$\eta = 10^{-6}, M = 0$]{
\includegraphics[width=5cm]{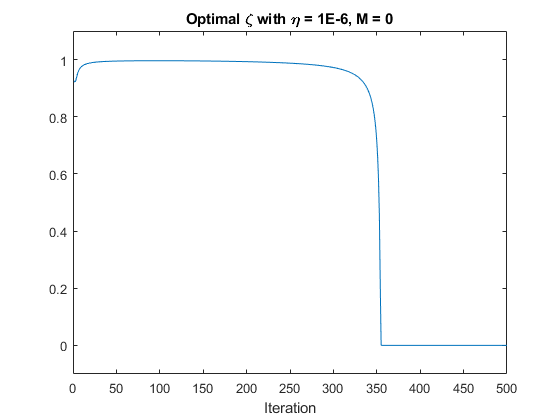}
}%
\caption{\tc{Switching behavior observed for the} optimal values of $\zeta_n^\tau$ computed via \eqref{RSAV:opt} for various values of $\eta$ and $M$.}
\label{fig:etaM}
\end{figure}

From our experiments it is interesting to see that the optimal $\zeta_n^\tau$ is 0 for moderate values of $M$ and $\eta$, meaning that it is possible to consider the \tc{idealized} update $q^n = Q(\vp^n)$ which preserves the consistency between the modified and original Ginzburg--Landau energy. It also suggests some form of numerical stability is available for the choice $q^n = Q(\vp^n)$, although this is not immediate from our \tc{stability} analysis. When $\eta$ and $M$ are sufficiently small or even zero, the tendency for the optimal $\zeta_n^\tau$ to be $1$ is supported by the fact that $R_n(1) \leq 0$ is satisfied even when $\eta = M = 0$.

Next we report on the $L^2(0,5;L^2(\Omega))$-error between the numerical solution and the analytical solution. The switching behavior for the optimal $\zeta_n^\tau$ for $\tau = 0.01$ displayed in Figure~\ref{fig:etaM} does not occur for smaller time steps with the same values of $\eta$ and $M$.  I.e., for $\tau = 0.0001$, $M= 0$ and $\eta = 10^{-6}$ the optimal $\zeta_n^\tau$ is $0$ for all $n = 1, \dots, N_\tau$. Hence, we compare the numerical errors in the \tc{$L^2(0,5;L^2(\Omega))$-norm with fixed $\zeta_n^\tau = \zeta \in \{0, 0.25, 0.5, 0.75, 1\}$ and a finer spatial discretization with step size $h = 0.001$. Since different time steps $\tau \in \{0.1, 0.05, 0.025, 0.0125, 0.00625, 0.003125, 0.0015625\}$ are used, with a terminal time $T = 5$ the number of iterations $N_\tau$ ranges from $100$ to $3200$. The comparison is displayed in Table~\ref{tbl:conv:1}, from which we observe the reduction of numerical error as the time step decreases for all considered settings. We note that all numerical values are one order of magnitude smaller to those reported in \cite[Fig.~4 (SAV/BDF1, R-SAV/BDF1)]{Zhang} for a standard Cahn--Hilliard equation discretized with a first order time discretization. While RSAV having smaller error values than SAV, unlike in \cite{Zhang}, the magnitude of the errors are almost indistinguishable among all cases, and the ratio of RSAV errors to SAV errors approaches 1 as the time step decreases. Thus, in this test case we do not see any clear advantage of RSAV over SAV in terms of numerical accuracy. Furthermore, the idealized choice $\zeta_n^\tau = 0$ may not always achieve the smallest error among RSAV schemes for certain time steps.}

We repeat our error comparison with two more examples, where the source functions are chosen specifically so that with $m(\vp) = 1$, $g = 0$, $F'(\vp) = \vp^3 - \vp$ and $\eps = 1$, the analytical solutions are
\[
\vp^*(x,t) = \exp(\cos(\pi x)) \cos(t) \quad \text{ and } \quad  \exp(\cos(t))(\sin(\pi x))^2,
\]
respectively. In Figure~\ref{fig:num:GL}(b) and (c), we plot the corresponding discrete Ginzburg--Landau energy for the above two analytical solutions with $h = \tau = 0.01$. Then, we set $h = 0.001$ in the error comparison between RSAV and SAV schemes for these two examples, which can be found in Tables~\ref{tbl:conv:2} and \ref{tbl:conv:3}, respectively. We again note that the modified Cahn--Hilliard equation does not exhibit a dissipative structure with the Ginzburg--Landau functional, RSAV have smaller (but similar in magnitude) errors than SAV, the ratio of RSAV errors to SAV errors approaches 1 as the time step decreases, and the idealized choice $\zeta_n^\tau = 0$ may not achieve the smallest errors among RSAV schemes for certain time steps.

Our findings indicate that in contrast to dissipative systems of the form \eqref{diss}, for Cahn--Hilliard equations with mass sources such as \eqref{CH}, RSAV schemes perform only marginally better than the standard SAV scheme in terms of numerical accuracy.

\begin{table} [!ht]
\begin{tabular}{|l||c|c|c|c|c|c|}
\hline
$(\tau, \zeta)$ & $1$ &  $0.75$ & $0.5$ & $0.25$ & $0$ \\
\hline
0.1 &  0.1235112595  & 0.1223488639 & 0.1221746557 & 0.1221087366 & {\bf 0.1220743681} \\
0.05 & 0.0657475964   & 0.0654207105 & 0.0653961915 &  0.0653875327 & {\bf 0.0653831138} \\
0.025  & 0.0370353831 & 0.0369482551 & 0.0369450147 &  0.0369439043  & {\bf 0.0369433434} \\
0.0125  & 0.0227271731 & 0.0227046023 & 0.0227041869 & 0.0227040466 &  {\bf 0.0227039746} \\
0.00625  & 0.0155923393  &  0.0155909899 & {\bf 0.0155845443} & 0.0156222078  & 0.0155860488  \\
0.003125 & 0.0120314862 & 0.0120300314 & 0.0120300093 & 0.0120300181& {\bf 0.0120300069} \\
0.0015625 & 0.0102552922 & 0.0102549247 & 0.0102549247 & {\bf 0.0102549230} & 0.0102549241 \\
\hline
\end{tabular}
\caption{Comparison of $L^2(0,5;L^2(\Omega))$-error for various choices of constant $\zeta$ with $\eta = 0.95$ and $M = 1$. The optimal $\zeta_n^\tau$ computed via \eqref{RSAV:opt} is $\zeta_n^\tau = 0$ in all scenarios. The smallest value in each row is highlighted in bold. The exact solution is $\vp^*(x,t) = \cos(\pi x)(1+t)$.}
\label{tbl:conv:1}
\end{table}

\begin{table} [!ht]
\begin{tabular}{|l||c|c|c|c|c|c|}
\hline
$(\tau, \zeta)$ & $1$ &  $0.75$ & $0.5$ & $0.25$ & $0$ \\
\hline
0.1 &  0.1580897113  & 0.1573562578 & 0.1573021781 & 0.1572764459 & {\bf 0.1572614308} \\
0.05 &  0.0785704830  & 0.0784190660 & 0.0784103517& 0.0784068449& {\bf 0.0784050015} \\
0.025  & 0.0391241631 & 0.0390891339 & 0.0390878945 & 0.0390874601& {\bf 0.0390872404} \\
0.0125  & 0.0195471355 & 0.0195389811 & {\bf 0.0195358830} & 0.0195387801 & 0.0195387550  \\
0.00625  & 0.0100802056  & 0.0100784782 & 0.0100784619 &0.0100784607 & {\bf 0.0100784602} \\
0.003125 & 0.0057771876 & {\bf 0.0057769012} & 0.0057769104& 0.0057769112 & 0.0057769075 \\
0.0015625 &  0.0042246587 & 0.0042246609 & {\bf 0.0042246027} & 0.0042246434& 0.0042246623 \\
\hline
\end{tabular}
\caption{Comparison of $L^2(0,5;L^2(\Omega))$-error for various choices of constant $\zeta$ with $\eta = 0.95$ and $M = 1$. The optimal $\zeta_n^\tau$ computed via \eqref{RSAV:opt} is $\zeta_n^\tau = 0$ in all scenarios. The smallest value in each row is highlighted in bold. The exact solution is $\vp^*(x,t) = \exp(\cos(\pi x)) \cos(t)$.}
\label{tbl:conv:2}
\end{table}

\begin{table} [!ht]
\begin{tabular}{|l||c|c|c|c|c|c|}
\hline
$(\tau, \zeta)$ & $1$ &  $0.75$ & $0.5$ & $0.25$ & $0$ \\
\hline
0.1 & 0.0743044854 & 0.0742724251 & 0.0742580052 & 0.0742503247  &  {\bf 0.0742457608} \\
0.05 & 0.0362455345 & 0.0362330117& 0.0362305278 &0.0362294871  & {\bf 0.0362289303}  \\
0.025  & 0.0175288174 & 0.0175263383  & 0.0175260131 & 0.0175258949 &  {\bf 0.0175258344} \\
0.0125  & 0.0085019366 & 0.0085014574  & 0.0085014301 & 0.0085014207 &  {\bf 0.0085014149} \\
0.00625  & 0.0046007411& 0.0046007312 & {\bf 0.0046007299} & 0.0046007307&  0.0046007395 \\
0.003125 & 0.0034729641 & 0.0034730059 & {\bf 0.0034729916} & 0.0034729941 & 0.0034729946  \\
0.0015625 & 0.0034123532 & 0.0034123725 & 0.0034123619 & {\bf 0.0034123592} & 0.0034123702  \\
\hline
\end{tabular}
\caption{Comparison of $L^2(0,5;L^2(\Omega))$-error for various choices of constant $\zeta$ with $\eta = 0.95$ and $M = 1$. The optimal $\zeta_n^\tau$ computed via \eqref{RSAV:opt} is $\zeta_n^\tau = 0$ in all scenarios. The smallest value in each row is highlighted in bold. The exact solution is $\vp^*(x,t) = \exp(\cos(t)) (\sin(\pi x))^2$.}
\label{tbl:conv:3}
\end{table}



\section{Applications}\label{sec:appl}
In this section \tc{we apply the SAV framework to various Cahn--Hilliard models that do not exhibit an obvious Lyapunov or dissipative structure due to the presence of source terms. In preliminary investigations we did not observe significant visual differences between the numerical solutions obtained from the standard SAV approach and the RSAV approach. Hence, to simplify the presentation,} we propose time discretizations based on the standard SAV approach and demonstrate their stability. \tc{The corresponding RSAV-based time discretizations and their stability can be obtained with a straightforward modification.} In turn, the convergence as $\tau \to 0$ to the corresponding weak solutions can be inferred similarly as in the proof of Theorem~\ref{thm:conv}. We then display some qualitative simulations on a square domain $\Omega = [0,1]^2$ discretized with a standard Friedrichs--Keller triangulation and spatial step size $h = 0.01$. Unless specified, in all simulations below we take the quartic potential $F(s) = \frac{1}{4}(s^2-1)^2$, and set constant $C_0 = 1$ in \eqref{scalarvar}.

\subsection{Microphase separation of diblock copolymer}
A phase field model proposed to describe the dynamics of microphase separation of diblock copolymers is the following Cahn--Hilliard--Oono equation \cite{Gior,Oono}
\begin{align}\label{CHO}
\pd_t \vp + \eta (\vp - c) = \Lap \mu, \quad \mu = - \eps \Lap \vp + \eps^{-1} F'(\vp),
\end{align}
furnished with homogeneous Neumann conditions. In the above $\eta >0 $ is a constant related to the chain length of the copolymer and $c \in \R$ is a prescribed relative mass average. If $c = \mean{\vp_0} = \frac{1}{|\Omega|} \int_\Omega \vp_0 \dx$ equals the mean value of the initial condition, then the system is also known as the Ohta--Kawasaki equation \cite{Ohta}, which is the conserved gradient flow of the Ohta--Kawaski functional
\[
E(\vp) = \int_\Omega \frac{\eps}{2} |\nabla \vp|^2 + \frac{1}{\eps} F(\vp) \dx + \int_{\Omega \times \Omega} \eta G(x-y) (\vp(x) - \mean{\vp})(\vp(y) - \mean{\vp}) \dx \dy,
\]
where $G$ is the Green function associated to the Neumann--Laplacian. A formal integration of the first equation in \eqref{CHO} yields a differential equation for the mean value $\mean{\vp}$:
\begin{align}\label{CHO:mass}
\frac{d \mean{\vp}}{dt} + \eta (\mean{\vp} - c) = 0 \quad \implies \quad \mean{\vp}(t) = c + e^{-\eta t}(\mean{\vp_0} - c) \; \forall t > 0,
\end{align}
where we note the conservation of mass $\mean{\vp}(t) = \mean{\vp_0}$ occurs only when $c = \mean{\vp_0}$.

A SAV-based time discretization of \eqref{CHO} reads as
\begin{subequations}\label{CHO:diss}
\begin{alignat}{2}
\vp^{n} - \vp^{n-1} & = \tau\, \Lap \mu^{n} + \tau \eta (c - \vp^{n-1}) \label{CHO:dis:1} \\
\mu^{n} & = - \eps \Lap \vp^{n} + \frac{q^{n}}{\eps Q(\vp^{n-1})} F'(\vp^{n-1}), \label{CHO:dis:2} \\
q^{n} - q^{n-1} & = \frac{1}{2 \eps Q(\vp^{n-1})} (F'(\vp^{n-1}), \vp^{n} - \vp^{n-1}), \label{CHO:dis:3}
\end{alignat}
\end{subequations}
furnished with homogeneous Neumann conditions. An interesting feature of this SAV discretization is that the source term need not be a bounded function, as we have the following discrete analogue of \eqref{CHO:mass}:
\[
\mean{\vp^n} = (1-\tau \eta)\mean{\vp^{n-1}} + \tau \eta c \quad \implies \quad \mean{\vp^n} = \mean{\vp_0}(1-\tau \eta)^n + c(1 - (1-\tau \eta)^n).
\]
For $\tau < \eta^{-1}$ we have the a priori estimate $|\mean{\vp^n}| \leq C$ for all $n \in \N$.  Then, testing \eqref{CHO:dis:1} with $\mu^n$ and $\vp^n$, \eqref{CHO:dis:2} with $\vp^n - \vp^{n-1}$, \eqref{CHO:dis:3} with $2q^n$, and upon summing we obtain for $G^k$ defined as in \eqref{def:G}:
\begin{align*}
& G^n - G^{n-1} + \frac{1}{2} \big ( \| \vp^n - \vp^{n-1} \|^2 + \eps \| \nabla(\vp^n - \vp^{n-1}) \|^2 \big ) + |q^n - q^{n-1}|^2 + \tau \| \nabla \mu^n \|^2 \\
& \quad =
\tau  \eta \Big ( (c - \mean{\vp^{n-1}}, \mu^n + \vp^n) + (\mean{\vp^{n-1}} - \vp^{n-1}, \mu^n+ \vp^n) \Big ) \\
& \quad \leq C \tau \big ( \| \mu^n\|_{L^1} + \| \vp^n \| \big ) + C \tau \| \nabla \vp^{n-1} \|( \| \nabla \mu^n \| + \| \vp^n \|),
\end{align*}
on account of the Poincar\'e inequality and
\[
(\mean{\vp^{n-1}} - \vp^{n-1}, \mu^n) = (\mean{\vp^{n-1}} - \vp^{n-1}, \mu^n - \mean{\mu^n}) \leq C \| \nabla \vp^{n-1} \| \| \nabla \mu^n \|.
\]
Upon using \eqref{mu:trick} and \eqref{mean:mu:est} with $m_0 = 1$, $g = 0$, $\zeta = 0$, $r^n = q^n$, we infer
\begin{equation}\label{stab:est:CHO}
\begin{aligned}
& G^n - G^{n-1} + \frac{\min(1,\eps)}{2}  \| \vp^n - \vp^{n-1} \|_{H^1}^2 +\frac{3}{4} |q^n - q^{n-1}|^2 + \frac{\tau}{2} \| \nabla \mu^n \|^2  \\
& \quad \leq C \tau (1 + G^{n-1} + G^n),
\end{aligned}
\end{equation}
which yields stability of the SAV scheme for $\tau$ sufficiently small. Notice that above calculation relies on the uniform control of the mean value of $\vp^n$, which is possible here due to the specific linear structure of the source term $f(\vp) = \eta(c - \vp)$.

We consider the scheme \eqref{CHO:diss} with parameter values $\eta = 0.001$ and $\eps = 0.01$, time step $\tau = 0.01$,  and $c = \mean{\vp^0}$ is the mean value of the initial condition $\vp^0$. Taking $\vp^0$ as the perturbation of $-0.5$ (Figure~\ref{fig:CHO1} top row) and of $-0.1$ (Figure~\ref{fig:CHO1} bottom row)  with uniformly distributed random numbers in $(0,0.2)$, we display the discrete solution $\vp^n$ at $n = 0$ (initial condition), $500$, $10000$ and $50000$ in Figure~\ref{fig:CHO1}. The behavior of the discrete solution for both settings is similar to the simulations in \cite[Sec.~3]{Kim}.

\subsection{Image segmentation}
In \cite{Yang}, the authors proposed an image segmentation model with the following Cahn--Hilliard equation
\begin{align}\label{Seg}
\pd_t \vp = \Lap \mu - \frac{\eta \Big (\lambda_1(I(x) - c_1)^2 - \lambda_2(I(x) - c_2)^2 \Big )}{\pi( \eta^2 + (\vp - 0.5)^2)}, \quad \mu = - \eps \Lap \vp + \eps^{-1} F'(\vp),
\end{align}
furnished with homogeneous Neumann conditions, $F(s) = 2s^2(s-1)^2$, positive constants $\lambda_1, \lambda_2$, grayscale image function $I$ rescaled to the range $[0,1]$, and $c_1$ and $c_2$ represent averaged pixel intensities inside and outside the segmented regions. The source term can be interpreted as a $L^2$-gradient of the energy functional
\[
M(u) = \lambda_1 \int_\Omega (I(x) - c_1)^2 H_{\eta}(u - \tfrac{1}{2}) \dx + \lambda_2 \int_\Omega (I(x) - c_2)^2 [1 - H_{\eta}(u - \tfrac{1}{2}) ]\dx
\]
where $H_\eta(z) = \frac{1}{2}(1 + \frac{2}{\pi} \arctan(z/\eta))$ is a $C^\infty$-regularization of the Heaviside function, and $\delta_\eta(z) = \eta/(\pi(\eta^2 + z^2)) = H_\eta'(z)$ approximates the Dirac measure. The average intensities $c_1$ and $c_2$ are defined as
\begin{align}\label{av:inten}
c_1(\vp) = \frac{(I, H_\eta(\vp - \frac{1}{2}))}{(1, H_\eta(\vp - \frac{1}{2}))}, \quad c_2 (\vp)=  \frac{(I, 1 - H_\eta(\vp - \frac{1}{2}))}{(1, 1 - H_\eta(\vp - \frac{1}{2}))}.
\end{align}

Due to the bounded source term, a SAV-based discretization of \eqref{Seg} reads as
\begin{equation}\label{dis:Seg}
\begin{aligned}
\vp^n - \vp^{n-1}  & = \tau \Lap \mu^n - \tau \frac{\eta \Big (\lambda_1(I(x) - c_1^{n-1})^2 - \lambda_2(I(x) - c_2^{n-1})^2 \Big )}{\pi( \eta^2 + (\vp ^{n-1}- 0.5)^2)}, \\
\mu^{n} & = - \eps \Lap \vp^{n} + \frac{q^{n}}{\eps Q(\vp^{n-1})} F'(\vp^{n-1}), \\
q^{n} - q^{n-1} & = \frac{1}{2 \eps Q(\vp^{n-1})} (F'(\vp^{n-1}), \vp^{n} - \vp^{n-1}),
\end{aligned}
\end{equation}
furnished with homogeneous Neumann conditions. Then, the average intensities are updated to $c_1^n := c_1(\vp^n)$ and $c_2^n := c_2(\vp^n)$ via \eqref{av:inten}. It is straightforward to infer the estimate \eqref{stab:est:CHO} for $G^k$, leading to stability of the SAV scheme for sufficiently small $\tau$. 

We apply \eqref{Seg} to segment images of a cow (Figure~\ref{fig:seg} top row) and of a blood vessel (Figure~\ref{fig:seg} bottom row), where for initialization we take $c_1 = 1$ and $c_2 = 0$. Similar to \cite{Yang}, in these experiments we choose parameters $\eta = 0.1$, $\lambda_1 = 0.65$, $\lambda_2 = 1$ and time step $\tau = 0.001$. A two-stage procedure is used where we first solve \eqref{dis:Seg} with a large value of $\eps = 80$, and once a quasi-steady state has been achieved (approximately at $n = 5000$), we reduce the value of $\eps$ to $0.01$ and resume the iterative process until a new steady state is reached.  In Figure~\ref{fig:seg} we display the $1/2$-level set of discrete solution $\vp^n$ in red overlayed with the original image at iterations $n = 1000, 3000$ and $10000$, as well as the discrete solution itself at $n = 10000$. We note that the segmentation task is well-performed in both experiments, and for a comparison between the Cahn--Hilliard approach \eqref{Seg} with the classical Chan--Vese approach we refer the reader to \cite{Yang}.

\subsection{Image inpainting}
In \cite{Bert2} the authors proposed the following Cahn--Hilliard model to restore missing or damaged details in a measurable subdomain $D \subset \Omega$ for a binary image function $I: \Omega \to \{-1,1\}$:
\begin{align}\label{Inpaint}
\pd_t \vp = \Lap \mu + \lambda(x)(I(x) - \vp), \quad \mu = - \eps \Lap \vp + \eps^{-1} F'(\vp), \quad \lambda(x) = \lambda_0 \chi_{\Omega \setminus D}(x),
\end{align}
furnished with homogeneous Neumann conditions. In the above $\chi_A(x)$ denotes the characteristic function of the set $A$, and the fidelity term $\lambda(x)(I(x) - \vp)$ with large constant $\lambda_0$ demands the solution $\vp$ to be close to the data $I(x)$ in the undamaged region $\Omega \setminus D$. In contrast to \eqref{CHO} there is no analogue to \eqref{CHO:mass} for the mean value $\mean{\vp}$ due to the function $\lambda(x) = \lambda_0 \chi_{\Omega \setminus D}(x)$ (see \cite{CFM} for an inequality estimate) and thus our stability analysis does not apply to a standard SAV discretization of \eqref{Inpaint}.   On account of the purpose of the fidelity term, a reasonable modification of \eqref{Inpaint} is 
\begin{align}\label{Inpaint:mod}
\pd_t \vp = \Lap \mu + \lambda(x) (I(x) - \mathcal{T}(\vp)) , \quad \mathcal{T}(\vp) = \max(-1, \min(1,\vp)).
\end{align}

For the modified equation \eqref{Inpaint:mod}, the source term is bounded and so a SAV-based discretization reads as
\begin{equation}\label{dis:Inpaint}
\begin{aligned}
\vp^n - \vp^{n-1} & = \tau \Lap \mu^n + \tau \lambda(x) (I(x) - \mathcal{T}(\vp^{n-1})), \\
\mu^{n} & = - \eps \Lap \vp^{n} + \frac{q^{n}}{\eps Q(\vp^{n-1})} F'(\vp^{n-1}),  \\
q^{n} - q^{n-1} & = \frac{1}{2 \eps Q(\vp^{n-1})} (F'(\vp^{n-1}), \vp^{n} - \vp^{n-1}),
\end{aligned}
\end{equation}
furnished with homogeneous Neumann conditions.  It is straightforward to infer the estimate \eqref{stab:est:CHO} for $G^k$, leading to stability of the SAV scheme for sufficiently small $\tau$.

We apply \eqref{dis:Inpaint} to perform inpainting of a double stripe shown in Figure~\ref{fig:inpaint}(a). Similar to image segmentation, we adopt a two-stage procedure where we first solve \eqref{dis:Inpaint} with parameters $\lambda_0 = 10$, $\eps = 100$, $\tau =0.1$, and after $n = 3000$ iterations we then switch a new set of parameters $\lambda_0 = 0.1$, $\eps = 5$, $\tau = 1$. Figure~\ref{fig:inpaint}(d) displays the discrete solution $\vp^n$ at $n = 4000$ and is an acceptable reconstruction result, cf.~\cite{Kim}.

\subsection{Tumor growth dynamics}\label{sec:Tumor}
In \cite{Crisbook,GLSS,Wise} the following model is used to describe tumor growth dynamics:
\begin{subequations}\label{Tumor}
\begin{alignat}{2}
\pd_t \vp & = \div (m(\vp) \nabla \mu) + P(\vp, \mu, \sigma), \\
\mu&  = - \eps \Lap \vp + \eps^{-1} F'(\vp) - \chi \sigma, \\
\pd_t \sigma & = \div (n(\vp) \nabla ( \chi_\sigma  \sigma - \eta  \vp)) - S(\vp, \mu, \sigma).
\end{alignat}
\end{subequations}
In the above, $\vp \in [-1,1]$ represents the difference in volume fraction between tumor and healthy tissues; $\mu$ is the associated chemical potential; $\sigma$ denotes the concentration of a nutrient for the tumor; $m(\vp)$ and $n(\vp)$ denote positive mobilities for $\vp$ and $\sigma$, \tc{bounded above and below by constants $m_1, m_0$ and $n_1, n_0$, respectively}; parameters $\chi_\sigma > 0$, $\chi > 0$ and $\eta \geq 0$ model the nutrient diffusivity, chemotaxis and active transport mechanisms respectively; $P$ accounts for tumor proliferation and apoptosis; $S$ models nutrient consumption. When $\chi = \eta$, and furnishing with homogeneous Neumann conditions, the model admits the following energy identity
\begin{align*}
& \frac{d}{dt} \int_\Omega \frac{\eps}{2} |\nabla \vp|^2 + \frac{1}{\eps} F(\vp) + \frac{1}{2} |\sigma|^2 - \chi \vp \sigma \dx + \int_\Omega m(\vp) |\nabla \mu|^2 + n(\vp)|\nabla (\chi_\sigma \sigma - \chi \vp)|^2 \dx \\
& \quad = \int_\Omega P(\vp, \mu, \sigma) \mu - S(\vp, \mu, \sigma)(\sigma - \eta \vp) \dx.
\end{align*}
The forms of the source terms $P$ and $S$ determine whether $E(\vp, \sigma) = \int_\Omega \frac{\eps}{2} |\nabla \vp|^2 + \frac{1}{\eps} F(\vp) + \frac{1}{2} |\sigma|^2 - \chi \vp \sigma \dx$ is a Lyapunov functional. One example proposed in \cite{Hawkins} is to consider $P(\vp, \mu, \sigma) = S(\vp, \mu, \sigma) = Q(\vp)(\sigma - \chi \vp - \mu) = Q(\vp) ( \frac{\delta E}{\delta \sigma} - \frac{\delta E}{\delta \vp})$ as a difference of chemical potentials weighted by a bounded, nonnegative function $Q$. This choice would yield $\frac{d}{dt} E(\vp, \sigma) \leq 0$, and the standard SAV schemes can be applied, see \cite{SWZ}.  Another, more phenomenological in nature, is the following used in e.g.~\cite{GLSS,Wise}:
\begin{align}\label{linearkinetics}
P(\vp, \sigma) = h(\vp)(\mathcal{P}k(\sigma) - \mathcal{A}), \quad S(\vp, \sigma) = \mathcal{C} h(\vp) \sigma,
\end{align}
with constant proliferation rate $\mathcal{P}$, apoptosis rate $\mathcal{A}$, nutrient consumption rate $\mathcal{C}$ and bounded, nonnegative functions $h$ and $k$. One example is $h(s) = \max(-1, \min(1, \tfrac{1}{2}(1+s)))$ and $k(s) = \max(0,\min(\sigma_\infty,s))$ with fixed external supply concentration $\sigma_\infty$.

We consider a reformulation of the model with a new variable $\hat{\mu} := \mu + \chi \sigma$, where a SAV-based time discretization of \eqref{Tumor} with source terms of the form \eqref{linearkinetics} is
\begin{subequations}\label{Tumor:dis}
\begin{alignat}{2}
\sigma^{n} - \sigma^{n-1} & = \tau \, \div (n(\vp^{n-1}) \nabla (\chi_\sigma \sigma^{n} - \eta \vp^{n-1}) \tc{)} -  \tau S(\vp^{n-1}, \sigma^{n}),  \label{Tumor:dis:4} \\
\vp^{n} - \vp^{n-1} & = \tau\, \div (m(\vp^{n-1}) \nabla (\hat{\mu}^{n} - \chi \sigma^n)) + \tau P(\vp^{n-1}, \sigma^{n}) \label{Tumor:dis:1} \\
\hat{\mu}^{n} & = - \eps \Lap \vp^{n} + \frac{q^{n}}{\eps Q(\vp^{n-1})} F'(\vp^{n-1}), \label{Tumor:dis:2} \\
q^{n} - q^{n-1} & = \frac{1}{2 \eps Q(\vp^{n-1})} (F'(\vp^{n-1}), \vp^{n} - \vp^{n-1}), \label{Tumor:dis:3}
\end{alignat}
\end{subequations}
furnished with homogeneous Neumann conditions. Note that we allow $\eta$ and $\chi$ to be different constants. Given $(\vp^{n-1}, q^{n-1}, \sigma^{n-1})$, we first solve for $\sigma^n$ with \eqref{Tumor:dis:4}, and then solve for $(\vp^n, \hat{\mu}^n, q^n)$ with \eqref{Tumor:dis:1}-\eqref{Tumor:dis:3}.  As $P(\vp,\sigma)$ is bounded and $|S(\vp, \sigma)| \leq C|\sigma|$, by testing \eqref{Tumor:dis:1} with $\hat{\mu}^n$ and $\vp^n$, \eqref{Tumor:dis:2} with $\vp^n - \vp^{n-1}$, \eqref{Tumor:dis:3} with $2q^n$ and \eqref{Tumor:dis:4} with $K\sigma^n$ for some constant $K>0$ yet to be specified, upon summing we obtain for
\[
G^k := \frac{1}{2} \| \vp^k \|^2 + \frac{\eps}{2} \| \nabla \vp^k \|^2 + |q^k |^2 + \frac{K}{2} \| \sigma^k \|^2,
\]
the estimate
\begin{align*}
& G^n - G^{n-1} + \frac{1}{2} \big ( \| \vp^n - \vp^{n-1} \|^2 + \eps \| \nabla(\vp^n - \vp^{n-1}) \|^2 + K \| \sigma^n - \sigma^{n-1}\|^2 \big ) \\
& \qquad +  |q^n - q^{n-1}|^2 \ + \tau \int_\Omega m(\vp^{n-1}) |\nabla \hat{\mu}^n|^2 + K \chi_\sigma n(\vp^{n-1}) |\nabla \sigma^n|^2 \dx \\
& \quad = \tau (P(\vp^{n-1}, \sigma^{n-1}), \hat{\mu}^n + \vp^n) - \tau (m(\vp^{n-1}) \nabla \hat{\mu}^n, \nabla (\vp^n - \chi \sigma^n)) \\
& \qquad + \tau(\nabla \sigma^n, K\eta n(\vp^{n-1}) \nabla \vp^{n-1} + \chi m(\vp^{n-1}) \nabla \vp^n) - K\tau (S(\vp^{n-1}, \sigma^{n}), \sigma^n) \\
& \quad \leq C \tau \| \hat{\mu}^n \|_{L^1} + \frac{\tau m_0}{4} \| \nabla \hat{\mu}^n \|^2 +  \tau \Big (\frac{K \chi_\sigma n_0}{4} + c \Big ) \| \nabla \sigma^n \|^2 \\
& \qquad + C \tau \big ( \| \vp^n \|_{H^1}^2 + \| \nabla \vp^{n-1} \|^2  + \| \sigma^n \|^2 \big ) ,
\end{align*}
where $c> 0$ is independent of $K$.  Choosing $K$ sufficiently large so that $c < \frac{K \chi_\sigma n_0}{4}$, and using \eqref{mu:trick} and \eqref{mean:mu:est} for $\| \hat \mu^n \|_{L^1}$ we arrive at 
\begin{align*}
& G^n - G^{n-1} + \frac{\min(1,\eps)}{2} \| \vp^n - \vp^{n-1} \|_{H^1}^2 + \frac{K}{2} \| \sigma^n - \sigma^{n-1}\|^2 \\
& \qquad +  |q^n - q^{n-1}|^2 \ + \frac{\tau m_0}{2} \|\nabla \hat{\mu}^n \|^2 + \frac{\tau K \chi_\sigma n_0}{2} \|\nabla \sigma^n\|^2 \\
& \quad \leq C \tau \big ( 1 + G^n + G^{n-1} \big ),
\end{align*} 
which ensures the stability of the SAV scheme for sufficiently small $\tau$.

\begin{remark}
If $\eta = 0$ and $\sigma^0$ is bounded and non-negative, then by applying a comparison principle to \eqref{Tumor:dis:4} we can deduce that $\sigma^n$ is also bounded and non-negative for all $n$.  Thus, we can replace $k(\sigma)$ in \eqref{linearkinetics} with simply $\sigma$.
\end{remark}

We consider a similar setting to \cite{GaTr} where we set $\chi_\sigma = 25$, $\eta = \chi = 5$, constant mobilities $n(\vp) = m(\vp) = 1$, $\eps = 0.01$, $\tau = 0.001$, with initial conditions $\sigma^0 = 1$ and 
\[
\vp^0(\bm{x}) = - \tanh \Big ( \frac{|\bm{x}| - (0.05 +0.02\cos (2 \theta))}{\sqrt{2} \times 0.01} \Big), \quad \bm{x} = |\bm{x}|(\cos \theta, \sin \theta)^{\top}.
\]
\tc{Such a choice for the initial condition $\vp^0$ means the tumor is initially bounded by} a slightly perturbed circle. The source functions are chosen as $P(\vp, \sigma) = S(\vp,\sigma) = \frac{1}{2}k(\sigma)(1+\varphi)$ with $\sigma_\infty = 1$. In Figure~\ref{fig:tumor} we display the discrete solution $\vp^n$ at $n = 0$ (initial condition), $n = 8000$, $n = 13000$ and $n = 18000$, where the tumor undergoes morphological instabilities and develop fingers towards regions of high nutrient concentration.  Not shown here are plots of the discrete solution $\sigma^n$, which are visually similar to Figure~\ref{fig:tumor}, \tc{namely the nutrient concentration $\sigma^n$ is lower in the tumor region $\{\vp^n = 1\}$ and higher in the tissue region $\{\vp^n = -1\}$}. We note that these seem to \tc{resemble} previous simulations found in \cite{GLSS,GaTr,Wise}.

\section{Conclusion}
In this work we investigate the stability of a time discretization based on the relaxed scalar auxiliary variable (RSAV) approach of \cite{RSAV} for Cahn--Hilliard systems with bounded mass source. In general these systems may not exhibit dissipative structures, and so the stability of such SAV-based schemes are not immediate. Our proofs rely on a key estimate of a product term between the scalar auxiliary variable and the cubic nonlinearity.  \tc{For the} convergence of time discrete solutions \tc{our analysis covers two choices of the interpolating parameter $\zeta_n^\tau$ in \eqref{dis:4}.} Numerical simulations are supportive of our results and we are able to replicate the expected solution behavior for Cahn--Hilliard systems in tumor growth, image segmentation and image inpainting. 

\begin{figure}[htbp]
\centering
\subfloat[$n=0$]{
\includegraphics[width=3.5cm]{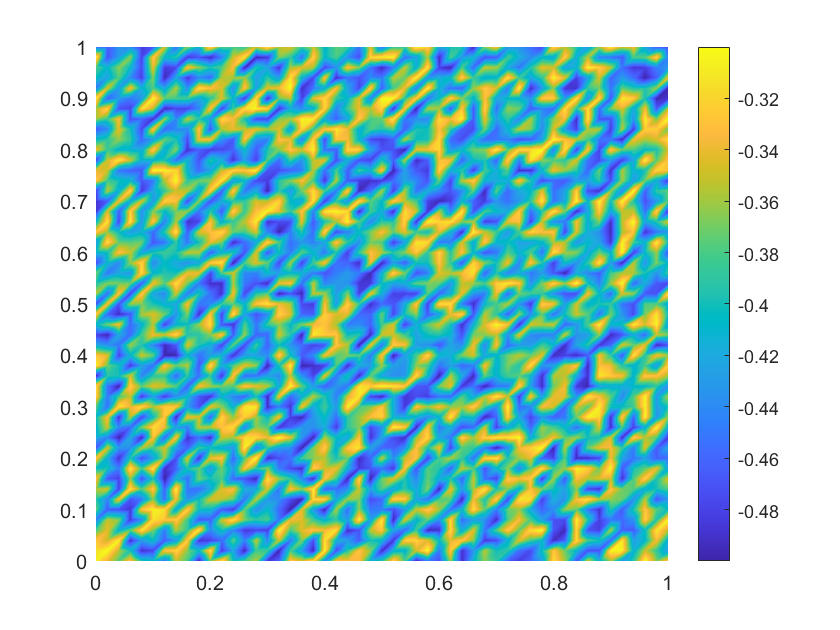}
}%
\subfloat[$n=500$]{
\includegraphics[width=3.5cm]{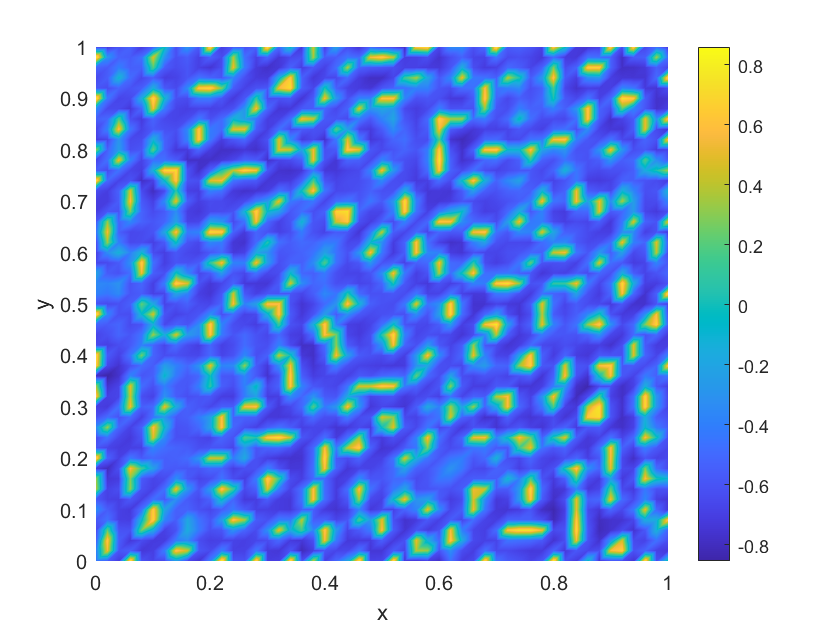}
}%
\subfloat[$n=10000$]{
\includegraphics[width=3.5cm]{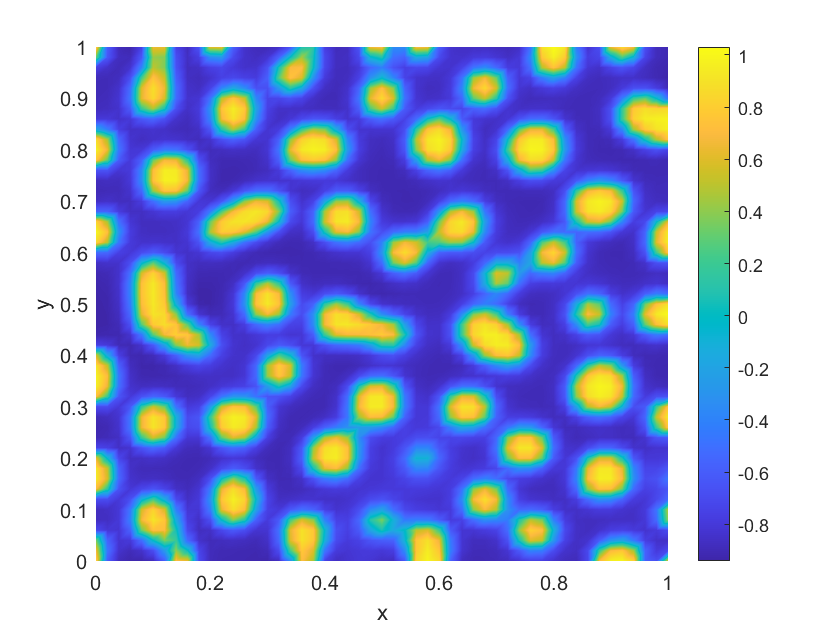}
}%
\subfloat[$n=50000$]{
\includegraphics[width=3.5cm]{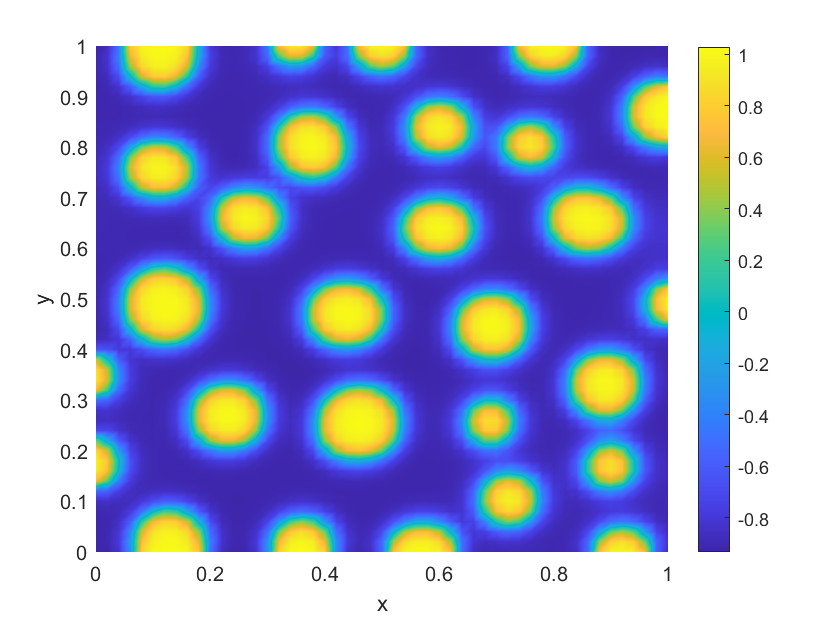}
}%
\\
\subfloat[$n=0$]{
\includegraphics[width=3.5cm]{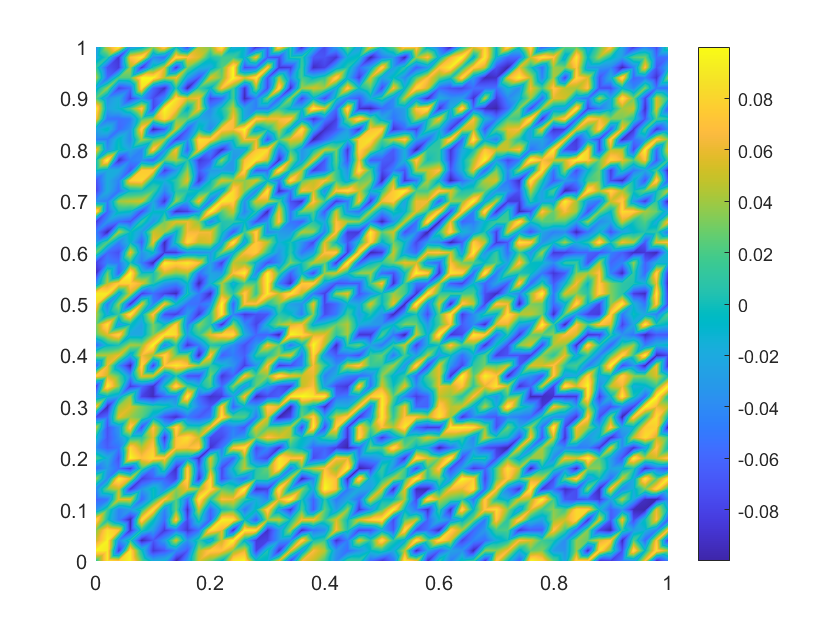}
}
\subfloat[$n=500$]{
\includegraphics[width=3.5cm]{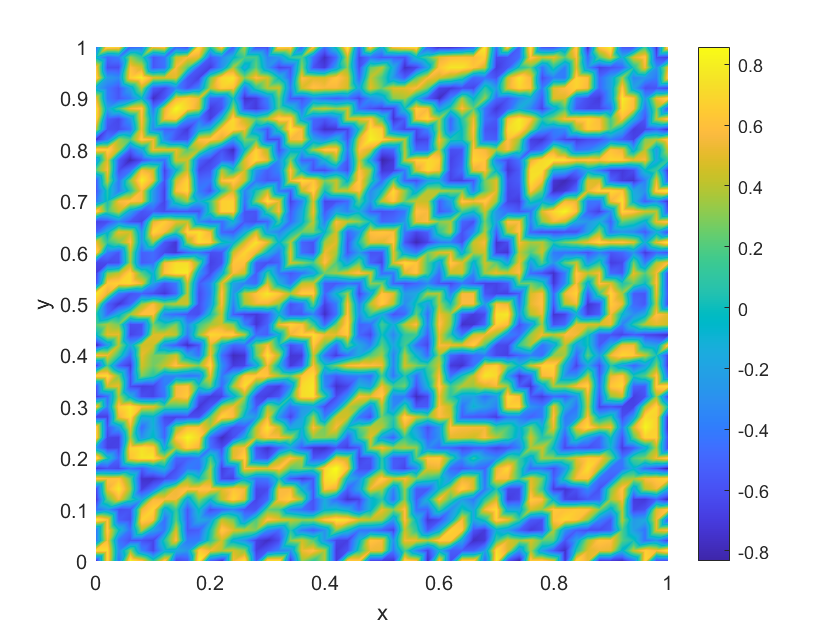}
} 
\subfloat[$n=10000$]{
\includegraphics[width=3.5cm]{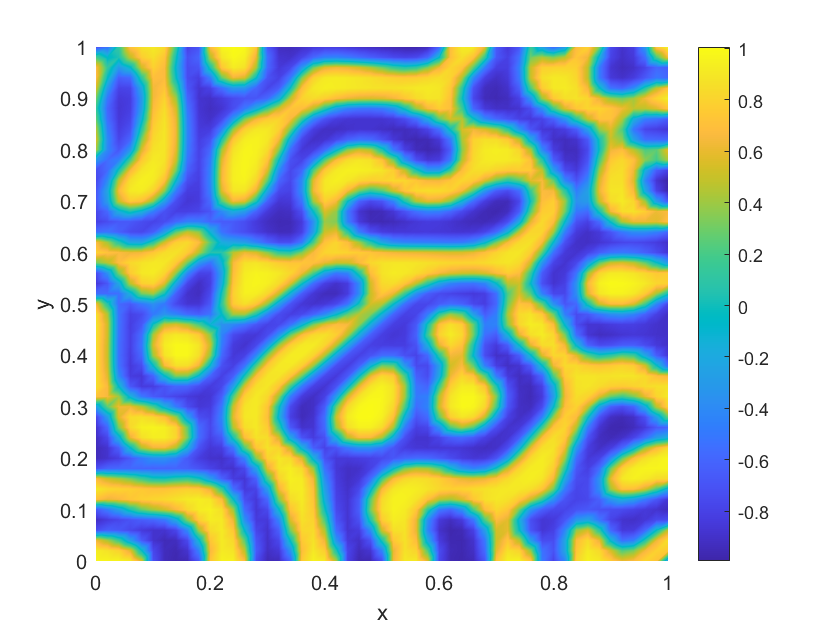}
} 
\subfloat[$n=50000$]{
\includegraphics[width=3.5cm]{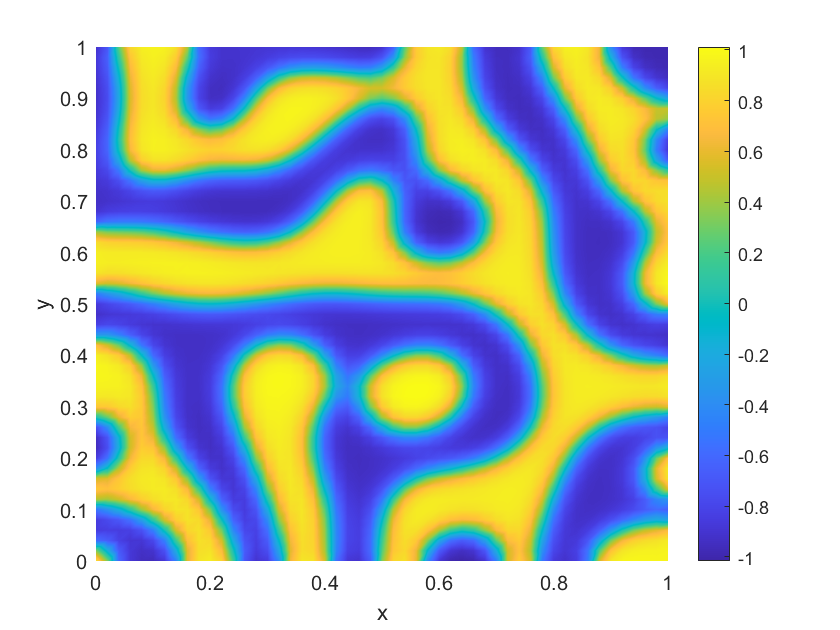}
}
\caption{Simulation of diblock copolymer dynamics with \eqref{CHO:diss}. Top row  (subfigures (a), (b), (c), (d)) display the evolution starting with initial condition $\vp^0 = -0.5 + U[0,0.2]$. Bottom row (subfigures (e), (f), (g), (h)) display the evolution starting with initial condition $\vp^0 = -0.1 + U[0,0.2]$.}
\label{fig:CHO1}
\end{figure}

\begin{figure}[htbp]
\centering
\subfloat[$n=1000$]{
\includegraphics[width=3.5cm]{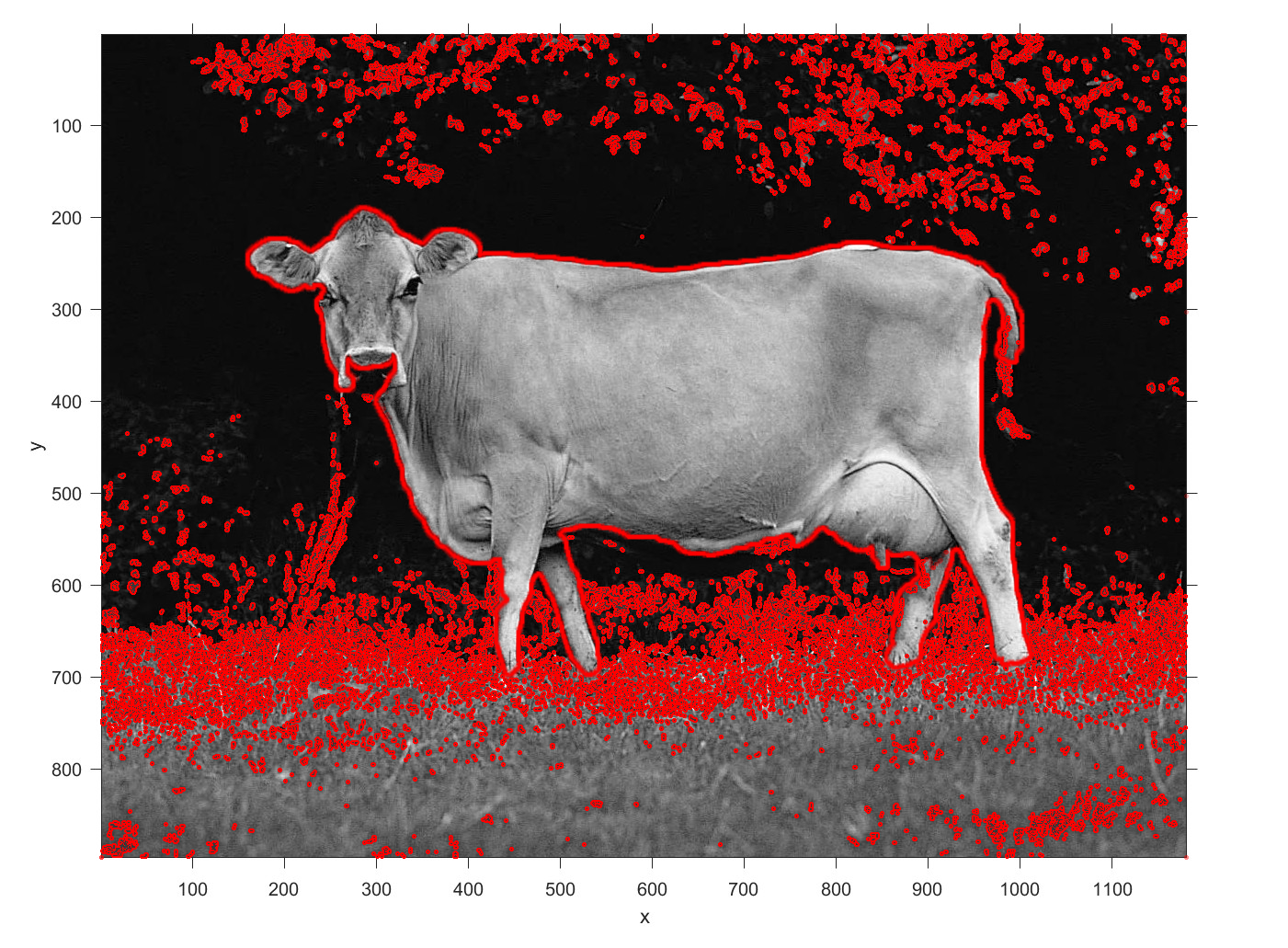}
}%
\subfloat[$n=3000$]{
\includegraphics[width=3.5cm]{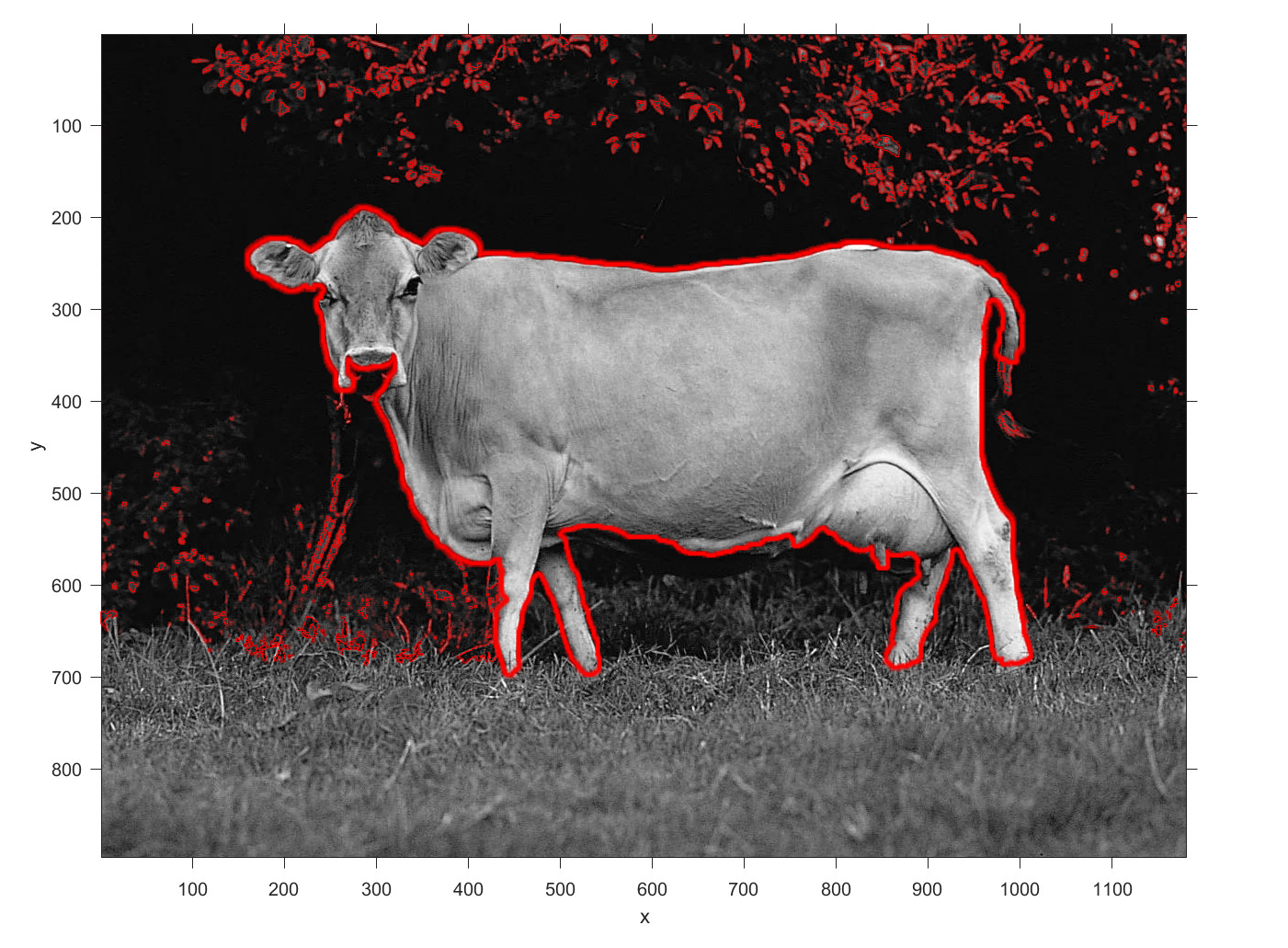}
}%
\subfloat[$n=10000$]{
\includegraphics[width=3.5cm]{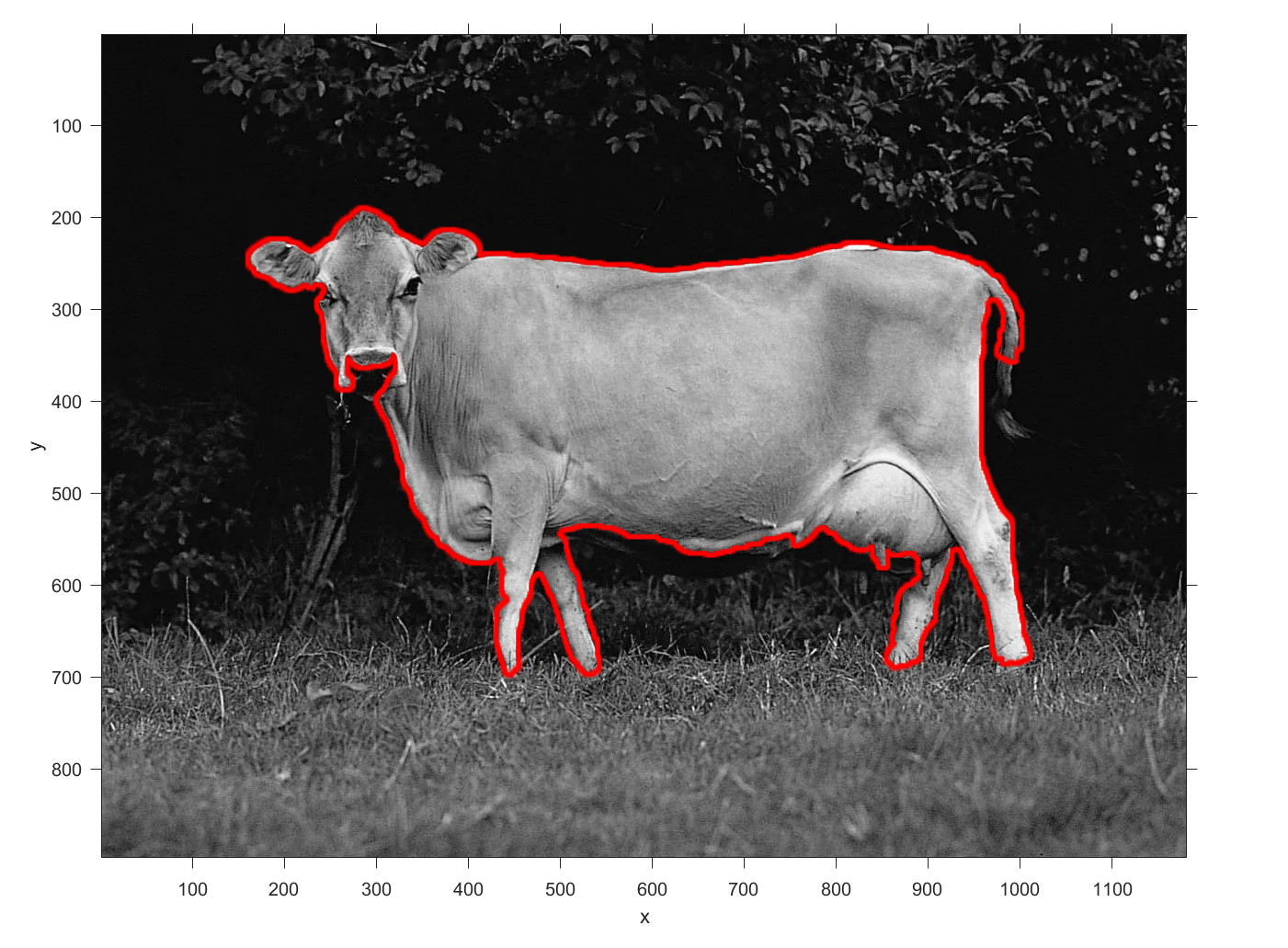}
}%
\subfloat[$n=10000$]{
\includegraphics[width=3.5cm]{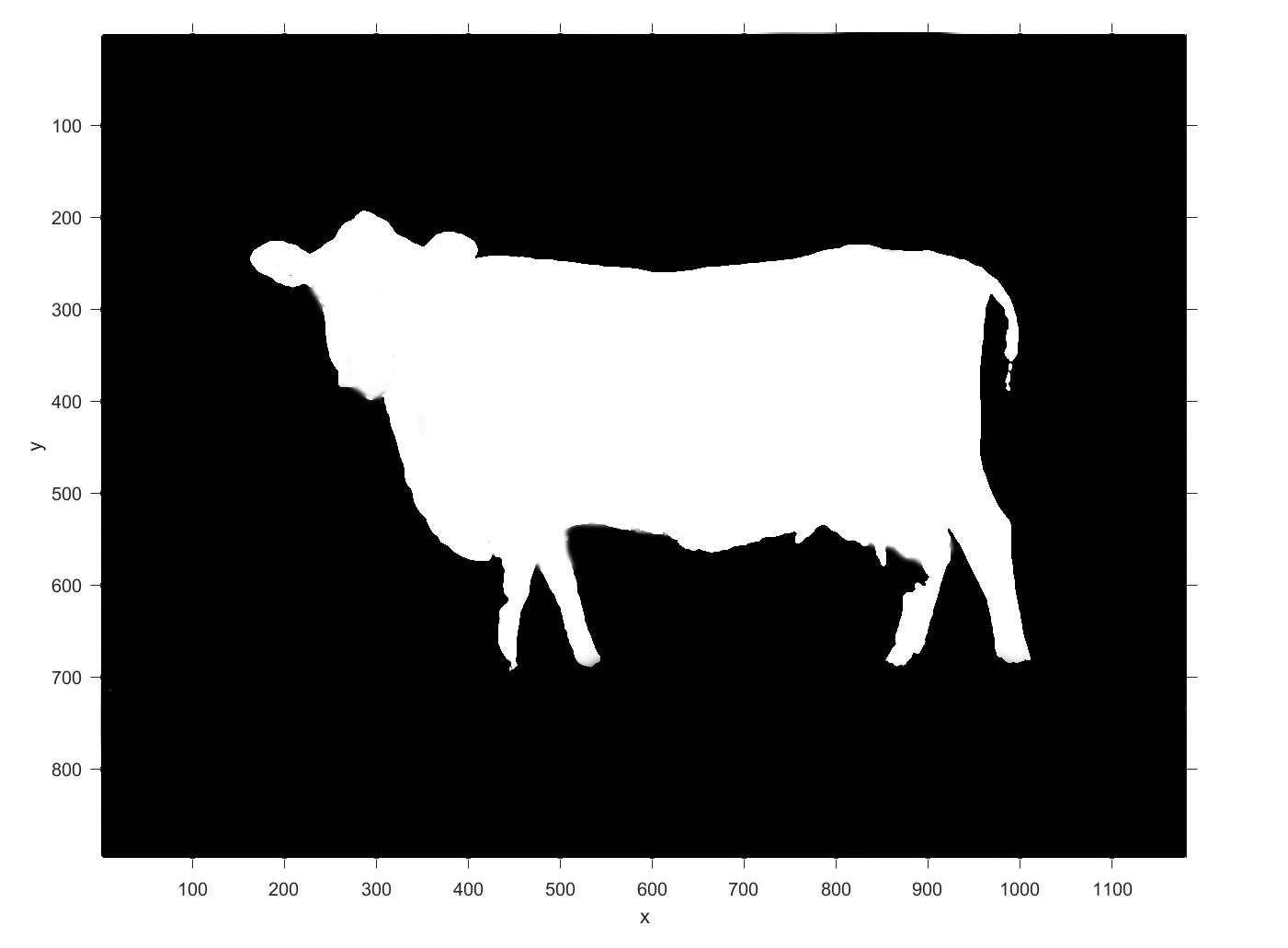}
}
\\
\subfloat[$n=1000$]{
\includegraphics[width=3.5cm]{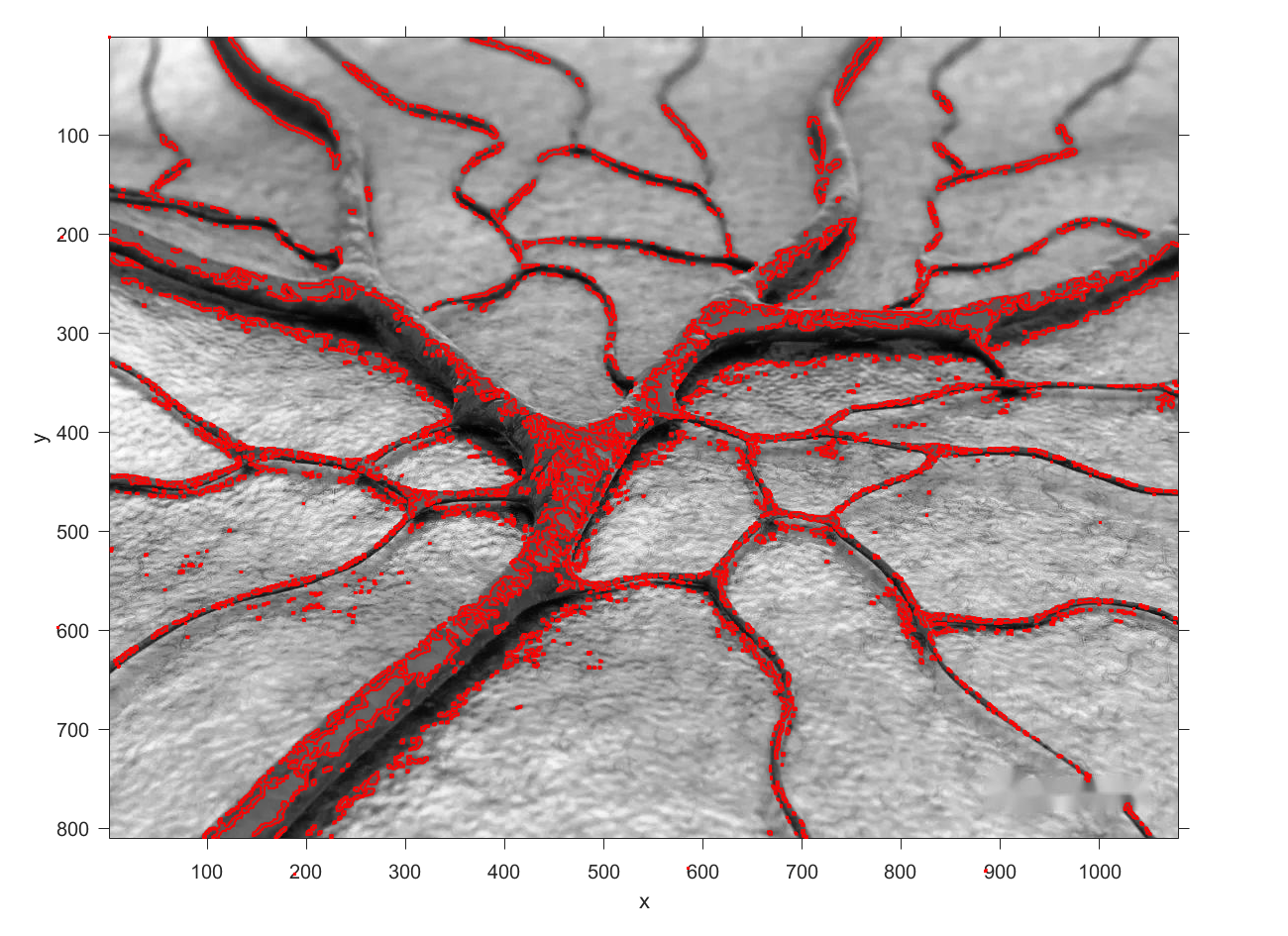}
}%
\subfloat[$n=3000$]{
\includegraphics[width=3.5cm]{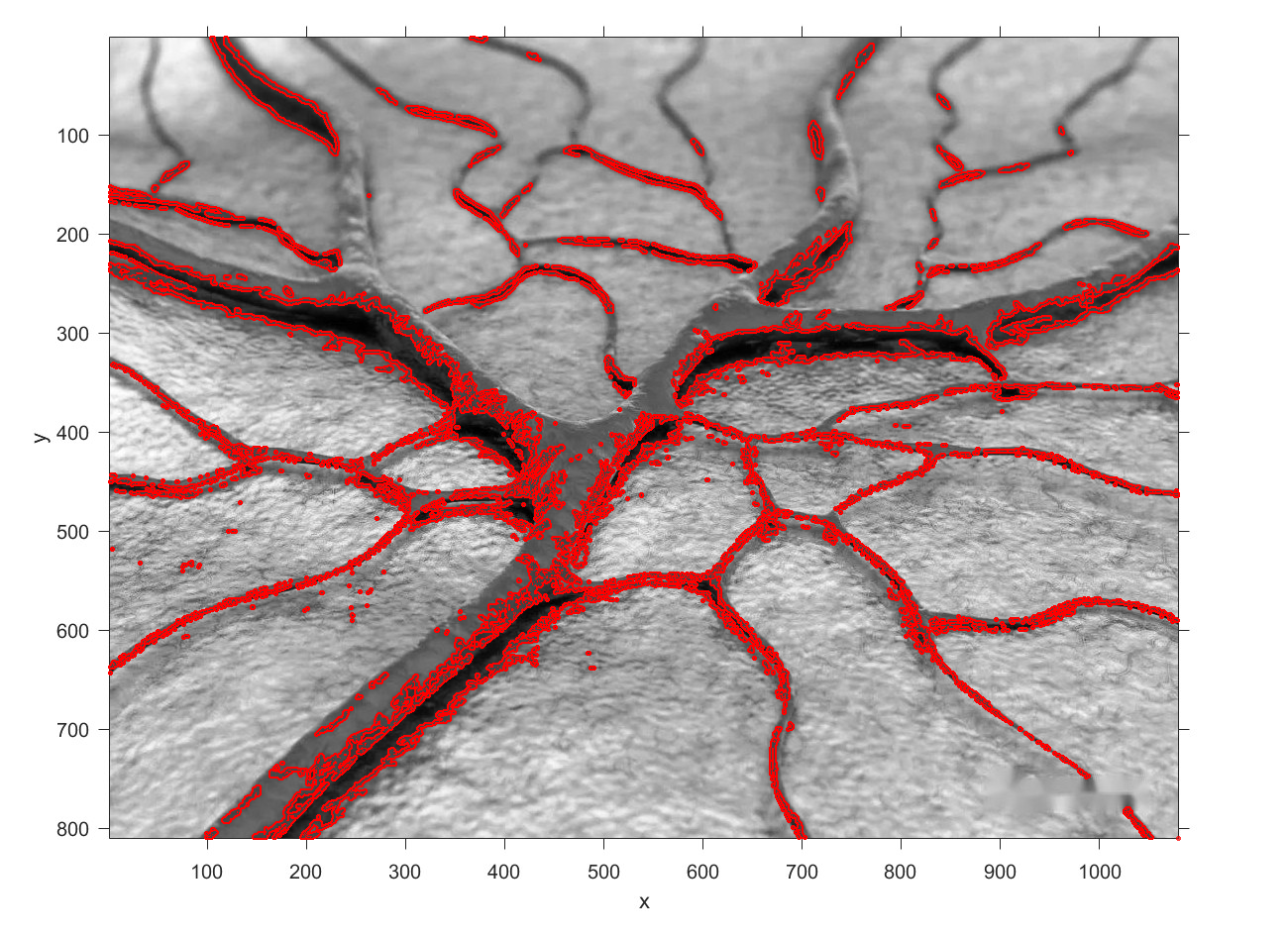}
}%
\subfloat[$n=10000$]{
\includegraphics[width=3.5cm]{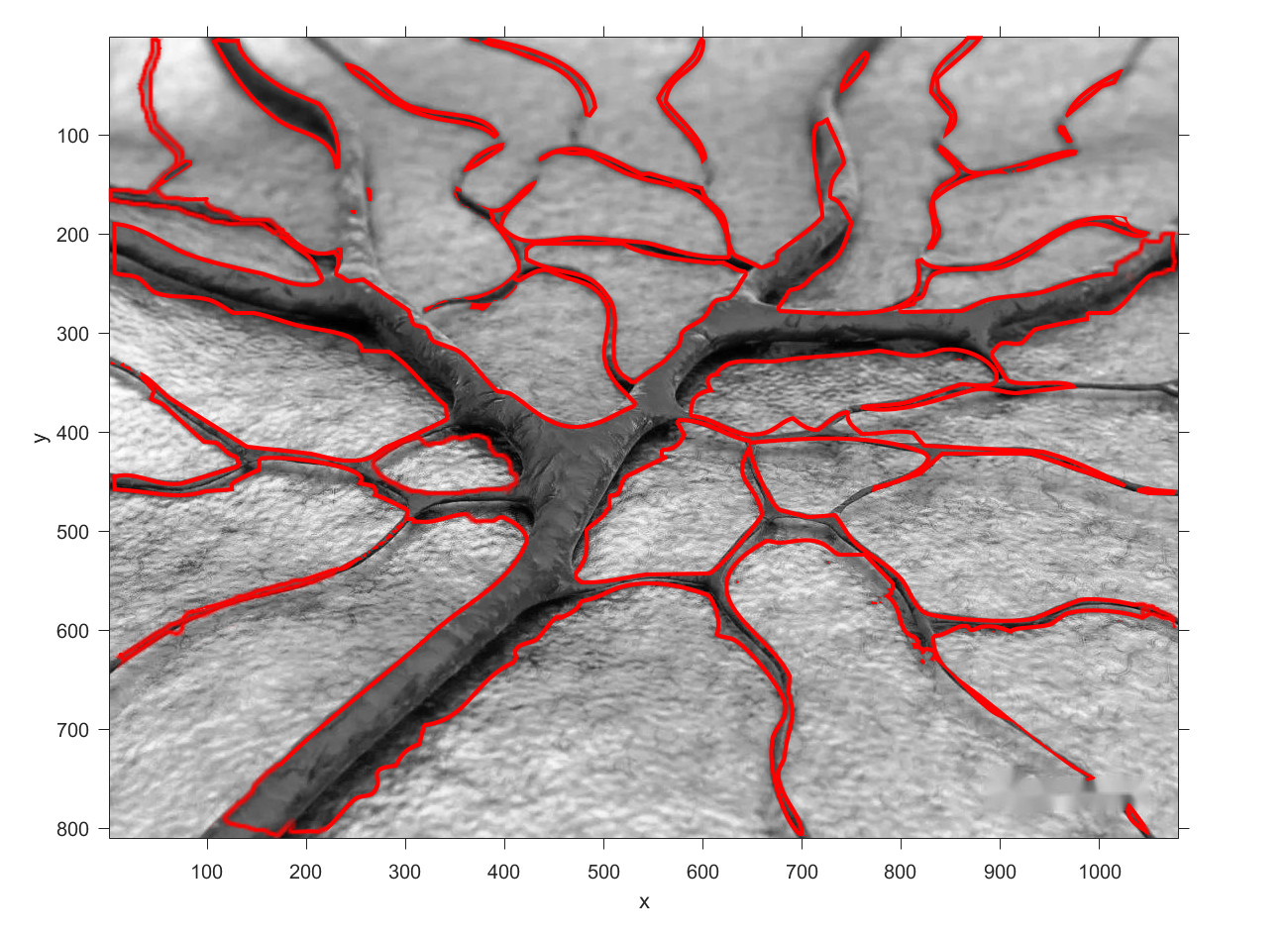}
}%
\subfloat[$n=10000$]{
\includegraphics[width=3.5cm]{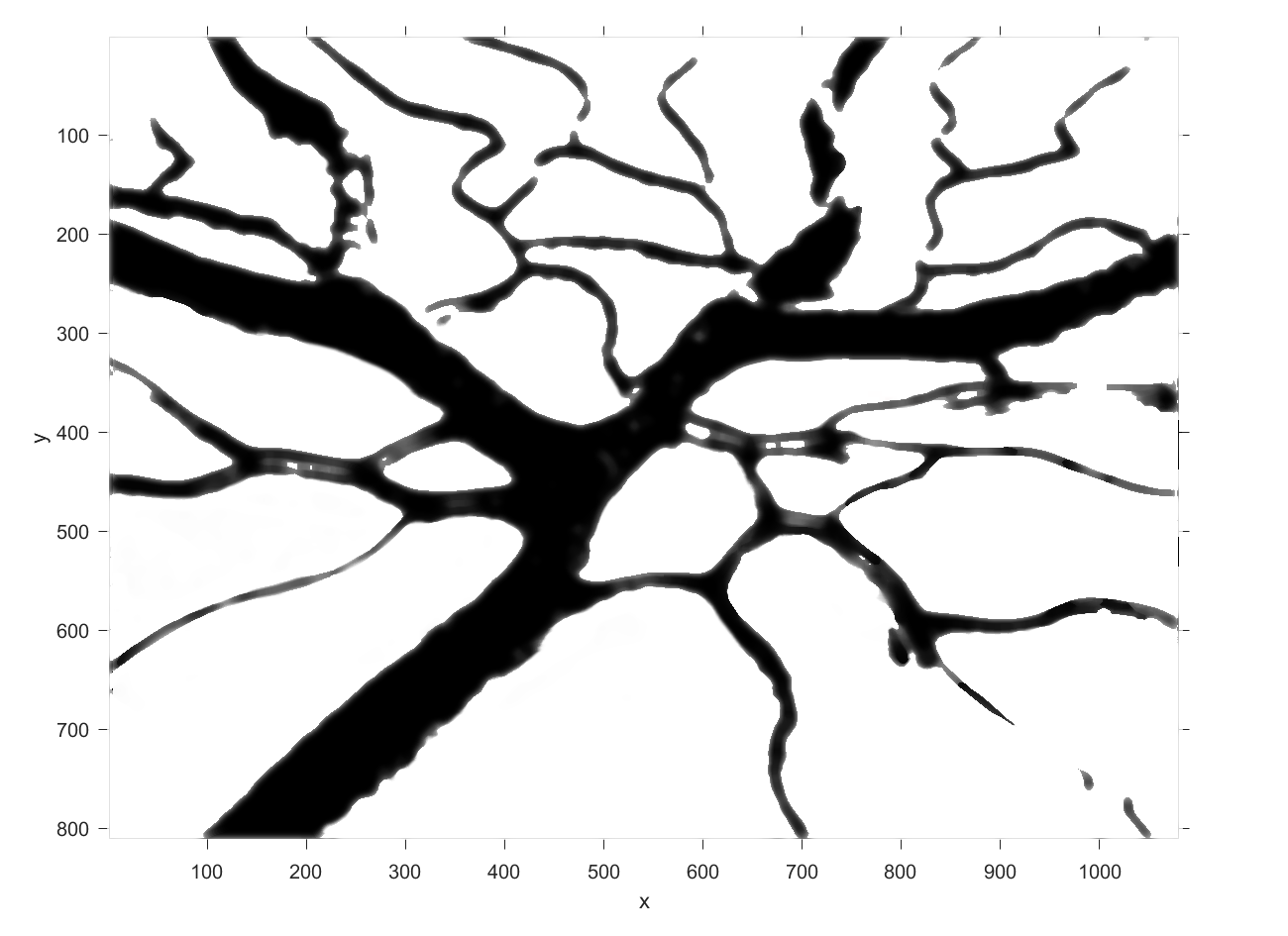}
}%
\caption{Image segmentation with \eqref{dis:Seg}. Segmentation of a cow image (top row) and a blood vessel image (bottom row) with \eqref{dis:Seg}. The first three subfigures ((a), (b), (c) and (e), (f), (g)) of each row display the 1/2-level set of $\vp^n$ (colored red) overlayed with the original image at $n = 1000$, $n = 3000$ and $n = 10000$, respectively.  The last subfigure ((d) and (h)) of each row display the discrete solution $\vp^n$ at $n = 10000$.}
\label{fig:seg}
\end{figure}
\begin{figure}[htbp]
\centering
\subfloat[$n = 0$]{
\includegraphics[width=3.5cm]{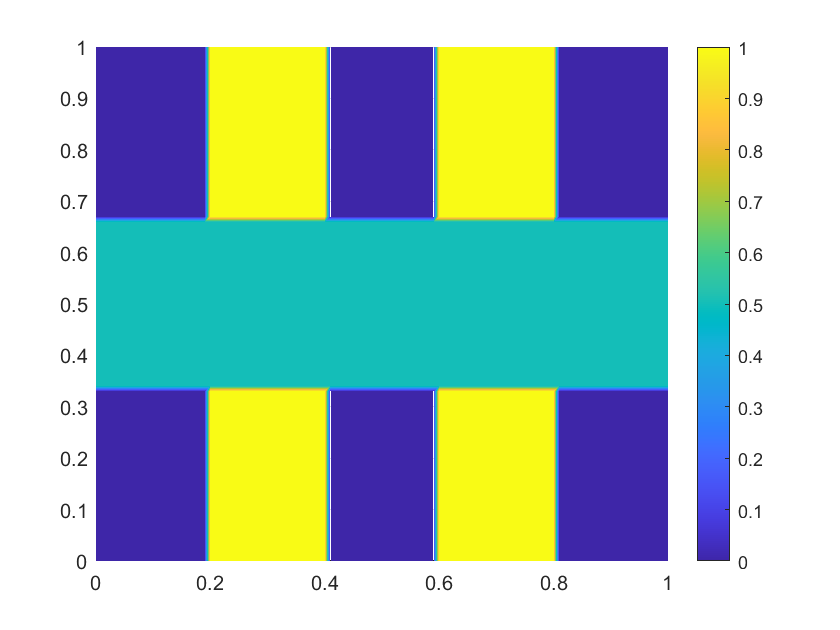}
}%
\subfloat[$n=400$]{
\includegraphics[width=3.5cm]{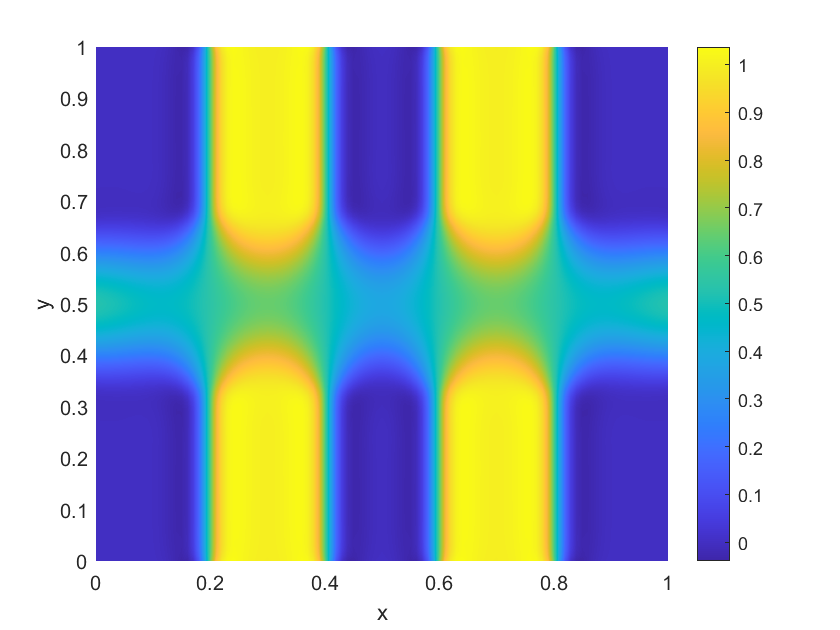}
}
\subfloat[$n=3200$]{
\includegraphics[width=3.5cm]{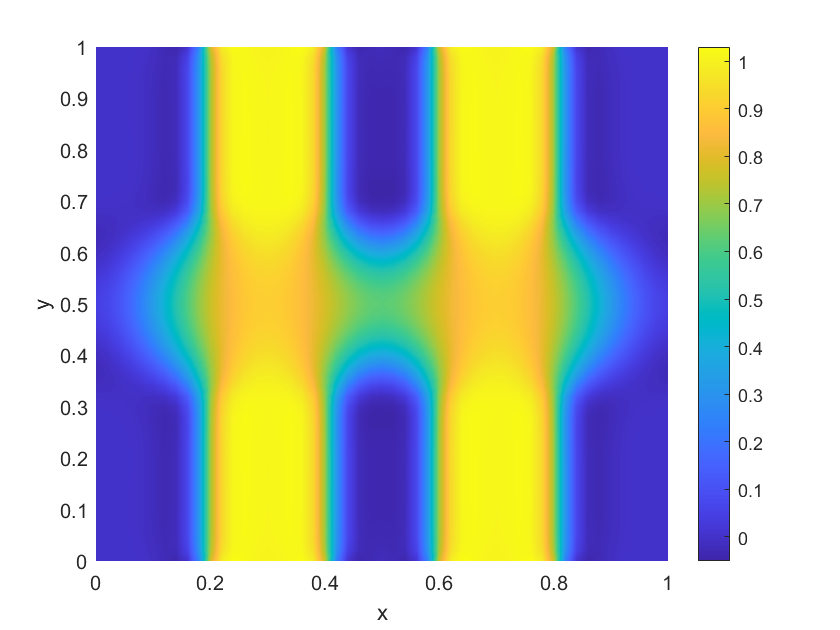}
}%
\subfloat[$n=4000$]{
\includegraphics[width=3.5cm]{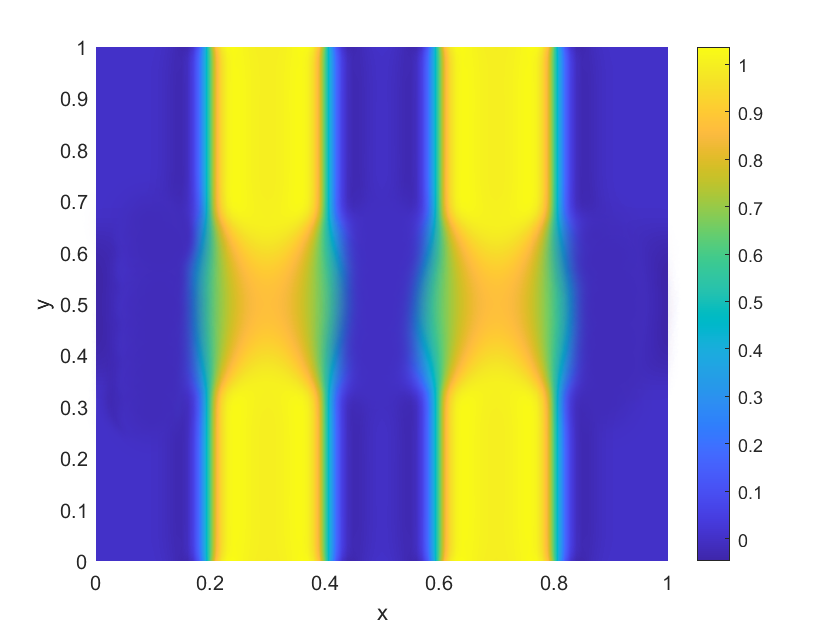}
}%
\caption{Inpainting of a double stripe binary image with \eqref{dis:Inpaint}.}
\label{fig:inpaint}
\end{figure}
\begin{figure}[htbp]
\centering
\subfloat[$n=0$]{
\includegraphics[width=3.5cm]{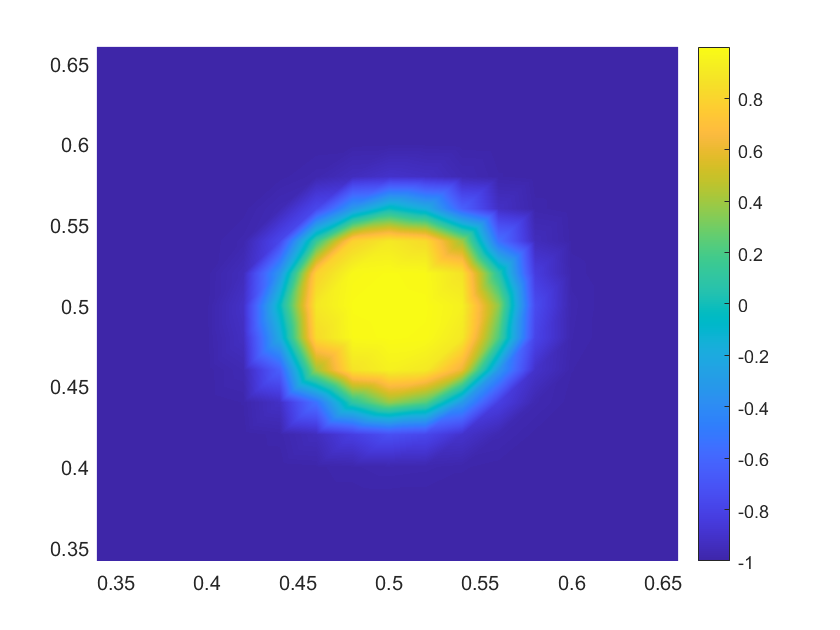}
}%
\subfloat[$n=8000$]{
\includegraphics[width=3.5cm]{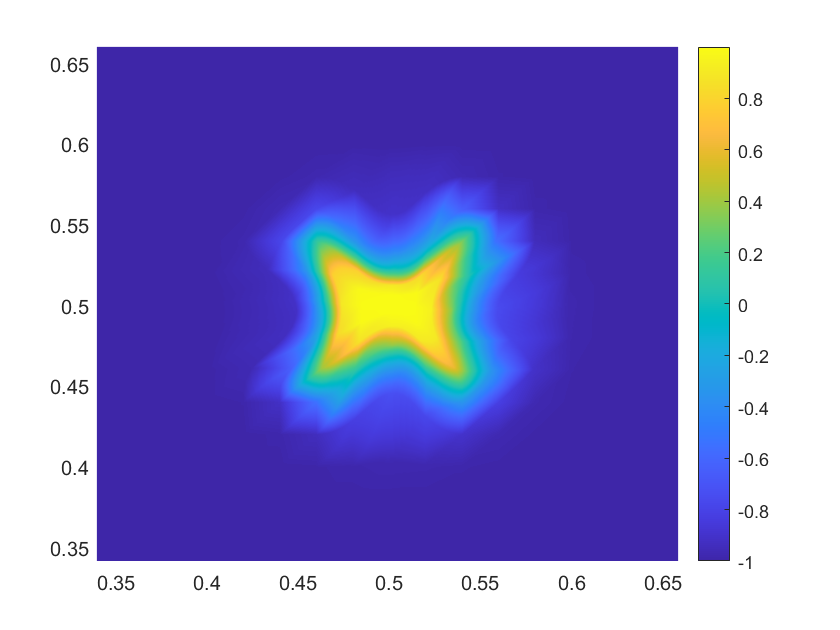}
}%
\subfloat[$n=13000$]{
\includegraphics[width=3.5cm]{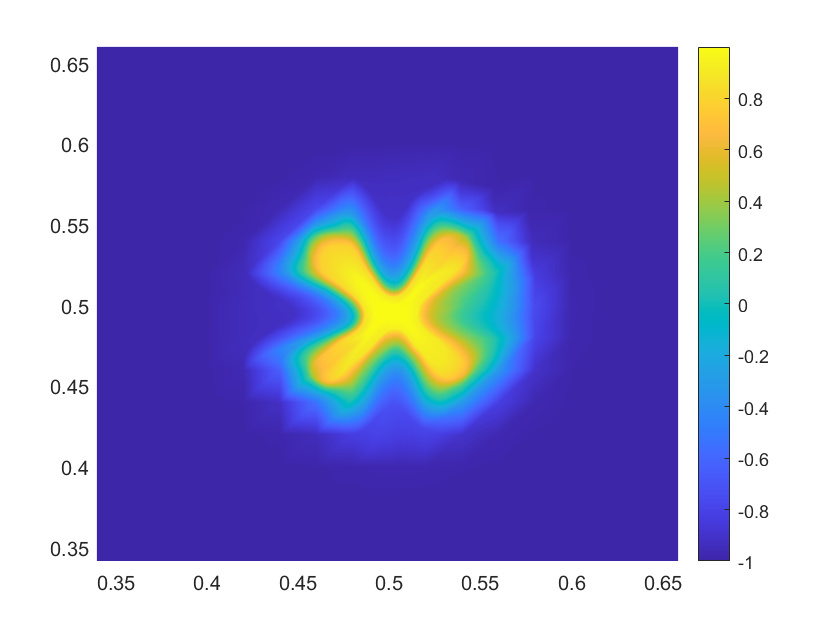}
}%
\subfloat[$n=18000$]{
\includegraphics[width=3.5cm]{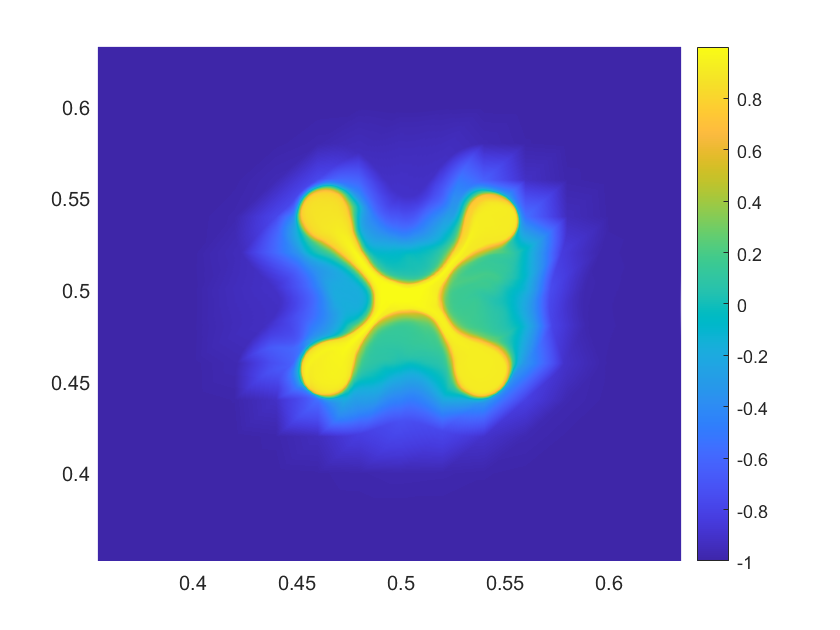}
}%
\caption{Simulation of tumor growth dynamics with \eqref{Tumor:dis}.}
\label{fig:tumor}
\end{figure}

\section*{Acknowledgments}
The authors gratefully acknowledge the support by the Research Grants Council of the Hong Kong Special Administrative Region, China [Project No.: HKBU 14302319] and Hong Kong Baptist University [Project No.: RC-OFSGT2/20-21/SCI/006].


\footnotesize
\bibliographystyle{plain}

\end{document}